\definecolor{urlcolor}{HTML}{990000}
\definecolor{linkcolor}{HTML}{005F5F}
\def\@ssect@ltx#1#2#3#4#5#6[#7]#8{%
  \def\H@svsec{\phantomsection}%
  \@tempskipa #5\relax
  \@ifdim{\@tempskipa>\z@}{%
    \begingroup
      \interlinepenalty \@M
      #6{%
       \@ifundefined{@hangfroms@#1}{\@hang@froms}{\csname @hangfroms@#1\endcsname}%
       {\hskip#3\relax\H@svsec}{#8}%
      }%
      \@@par
    \endgroup
    \@ifundefined{#1smark}{\@gobble}{\csname #1smark\endcsname}{#7}%
  }{%
    \def\@svsechd{%
      #6{%
       \@ifundefined{@runin@tos@#1}{\@runin@tos}{\csname @runin@tos@#1\endcsname}%
       {\hskip#3\relax\H@svsec}{#8}%
      }%
      \@ifundefined{#1smark}{\@gobble}{\csname #1smark\endcsname}{#7}%
      \addcontentsline{toc}{#1}{\protect\numberline{}#8}%
    }%
  }%
  \@xsect{#5}%
}%
\theoremstyle{plain}
\newtheorem{theorem}{Theorem}
\newtheorem{definition}[theorem]{Definition}
\newtheorem{proposition}[theorem]{Proposition}
\newtheorem{example}[theorem]{Example}
\newtheorem{remark}[theorem]{Remark}
\newtheorem{problem}[theorem]{Problem}
\newtheorem{corollary}[theorem]{Corollary}
\renewcommand{\phi}{\varphi}
\renewcommand{\epsilon}{\varepsilon}
\DeclareMathOperator{\im}{\mathrm{Im}}
\DeclareMathOperator{\re}{\mathrm{Re}}
\DeclareMathOperator{\diag}{\mathrm{diag}}
\def\wt#1{\widetilde#1}
\def\a{\alpha}
\def\bam{\ba^{-1}}
\def\baml{\alpha^{-1}}
\def\difa{\a-\a^{-1}}
\def\zaa{(z-\a)(z-\bam)}
\def\zaal{(z-\a)(z-\baml)}
\def\zaaa{\a+\bam-2\cos\tau}
\numberwithin{equation}{section}
\numberwithin{equation}{section}
\journal{Journal of Differential Equations}
\begin{document}

\title{Dynamical systems on torus related to general Heun equations: phase-lock areas and constriction breaking}

\author[1,2,3]{\normalsize Artem Alexandrov\fnref{au1}}
\author[4,2,5]{\normalsize Alexey Glutsyuk\fnref{au2}}
\affiliation[1]{Institute for Information Transmission Problems, Moscow, 127994, Russia}
\affiliation[2]{HSE University, Moscow, Russia}
\affiliation[3]{Phystech School of Applied Mathematics and Computer Science, Moscow Institute of Physics and Technology, Dolgoprudny 141700, Russia}
\affiliation[4]{Higher School of Modern Mathematics, Moscow Institute of Physics and Technology, Dolgoprudny 141700, Russia}
\affiliation[5]{CNRS UMR 5669 (UMPA, ENS de Lyon), Lyon, France}
\fntext[au1]{aleksandrov.aa@phystech.edu}
\fntext[au2]{aglutsyu@ens-lyon.fr}


\begin{keyword}
    Heun equation \sep phase-locking \sep rotation number quantization \sep constriction
\end{keyword}

\begin{abstract}
The overdamped Josephson junction in superconductivity theory can be modeled by the family of dynamical systems on the torus, which is known as the RSJ model. This family admits an equivalent description by a family of second-order differential equations: special double confluent Heun equations. In the present paper, we construct two new families of dynamical systems on torus that can be equivalently described by a family of general Heun equations (GHE), with four singular points, and confluent Heun equations, with three singular points. The first family, related to GHE, is a deformation of the RSJ model, which will be denoted by dRSJ. The phase-lock areas of a family of dynamical systems on the torus are those level subsets of the rotation number function that have nonempty interiors. It is known that for the RSJ model, the rotation number quantization effect occurs: phase-lock areas exist only for integer rotation number values. Moreover, each phase-lock area is a chain of domains separated by points. Those separation points that do not lie on the abscissa axis are called constrictions. In the present paper, we study phase-lock areas in the new family dRSJ. The quantization effect remains valid in this family. On the other hand, we show that in the new family dRSJ the constrictions break down. 
\end{abstract}

\maketitle

\newpage
\tableofcontents

\section{Introduction}

\subsection{Physical motivation and brief description of main results}

Heun equation in its different forms is ubiquitous in modern theoretical physics~\cite{hortaccsu2018heun}. This equation arises in many contexts: in quantum mechanics as the Schrodinger equation in a particular potential~\cite{ishkhanyan2016,figueiredo2024}, in the investigation of black hole quasinormal modes~\cite{fiziev2011application}, and in the description of conformal blocks~\cite{lisovyy2022perturbative}.

Despite the complicated structure of the Heun equation, it has many interesting properties (see~\cite{ronveaux1995heun,slavyanov-lay2000} for a comprehensive discussion) that allow one to introduce a suitable change of variables and see how the Heun equation transforms into the well-known and quite simple equations like the Lame equation, Whittaker-Hill, and Ince equations. From the physical point of view, the most interesting object directly related to the Heun equations is the monodromies around the singular points. In the context of high-energy physics, such monodromies are directly related to the quasinormal modes of black holes~\cite{hatsuda2020quasinormal,aminov2022black} and the properties of conformal blocks~\cite{litvinov2014classical}. The simplest but illustrative example of the significance of monodromy is that in the case of the double confluent Heun equation (DCHE) the monodromy around zero corresponds to the band-gap structure in a certain Schrodinger equation with periodic potential. In the classical context, it describes the phase lock areas of the \emph{classical} family of dynamical systems on a two-dimensional torus (see~\cite{alexandrov2025} for detailed discussion of this correspondence). 

Recall that to a family of dynamical systems on 2D torus corresponds the {\it rotation number function}, which assigns to given parameters the rotation number of the corresponding system; see the definition of rotation number in \cite{arn} and in the next subsection. The {\it phase-lock areas} are those level sets of the rotation number function in the parameter space that have non-empty interiors. In the case of DCHE the corresponding family of dynamical systems on the torus $\mathbb T^2=\mathbb R^2_{\theta,\tau}\slash2\pi\mathbb Z^2$ 
is the family 
\begin{equation}\label{eq:RSJ-model-rescaled}
    \frac{d\theta}{d\tau}=\frac{\cos\theta+B+A\sin\tau}{\omega}.
\end{equation}
The description of its phase lock areas in terms of the Heun equation monodromy is developed in~\cite{buchstaber2017monodromy}. The main motivation for this research is quite simple: the family~\eqref{eq:RSJ-model-rescaled} is given by the so-called RSJ model~\eqref{eq:RSJ-model}, which provides the phenomenological description of the Josephson junction (JJ) shunted by a resistance in the overdamped limit. The parameters of the family~\eqref{eq:RSJ-model-rescaled} are $(B,A)$; $B$ is the abscissa, $A$ is the ordinate, the frequency $\omega$ is fixed. Its phase-lock areas exist only for integer rotation number values, see \cite{buchstaber2010rotation}. They are ``garlands'', as shown in Figure 1. Each phase-lock area contains infinitely many so-called {\it constrictions}: those self-intersection points of its boundary that lie outside the abscissa axis.

The phase-locking phenomenon (existence of phase-lock areas) is a well-known fundamental property of families of non-linear dynamical systems \cite{pikovsky2001synchronization,arn}. It is familiar for the overdamped Josephson junction (JJ) driven by periodic external current, which is known as the RSJ model \cite{mccumber1968effect}. In addition, phase-locking is observed in many other systems: microparticle systems~\cite{mishra2025phase}, superconducting nanowires~\cite{dinsmore2008}, charge density waves~\cite{gruner1985charge}, skyrmions~\cite{reichhardt2015shapiro}.

Motivated by the mentioned facts, in the present paper we extend the idea that phase-locking in dynamical system family on 2D torus can be described in terms of the monodromy of DCHE. We introduce two new families~\eqref{dyn3},~\eqref{dyn4}  of dynamical systems on the 2D torus, which correspond, respectively, to the general Heun equations (GHE) and the confluent Heun equations (CHE). The family~\eqref{dyn3} is a deformation of the family~\eqref{eq:RSJ-model-rescaled}. We show that in family~\eqref{dyn3}, which corresponds to GHE, the phase-lock areas also exist only for integer rotation number values. However, compared to family~\eqref{eq:RSJ-model-rescaled}, which is given by the RSJ model and corresponds to DCHE, the phase-lock areas in the family corresponding to GHE do not contain constrictions.

\subsection{The RSJ model, rotation number and phase-lock areas}

\begin{figure}
    \centering
    \includegraphics[width=0.8\linewidth]{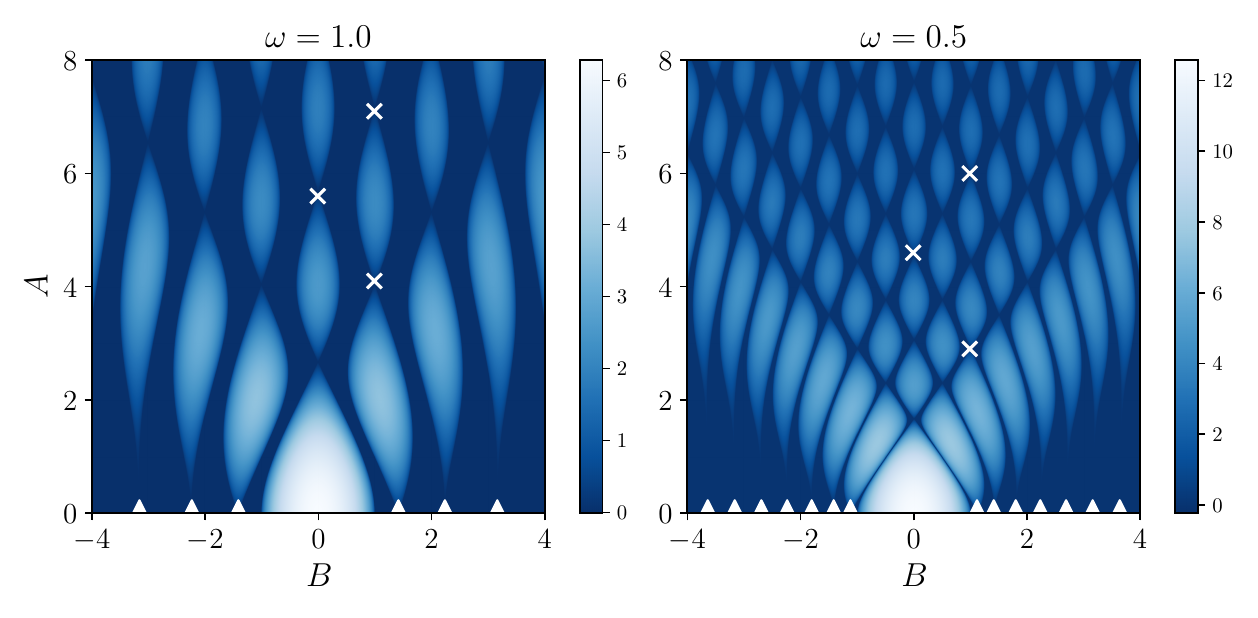}
    \caption{Phase-lock areas of the RSJ model~\eqref{eq:RSJ-model-rescaled} for different $\omega$. White crosses correspond to constriction points (only three constrictions are shown), white upper triangles correspond to growth points. Color bar represents the values of the largest Lyapunov exponent.}
    \label{fig:phase-lock-constr}
\end{figure}

For a brief introduction to the phase-locking phenomenon, it suffices to consider the RSJ model of the overdamped Josephson junction (JJ). It has the following equation of motion:
\begin{equation}\label{eq:RSJ-model}
    \frac{d\phi}{dt} = -\sin\phi + B + A\cos\omega t.
\end{equation}
Here $\phi$ is the phase difference between two superconductors with insulator link between them, $B$ and $A$ are dimensionless amplitudes of DC and AC currents applied to the junction, $\omega$ is the rescaled frequency of the AC current.

For further discussion, it is convenient to introduce the dimensionless time $\tau=\omega t$ and then shift it by $\pi/2$ and introduce the shifted phase $\theta=\phi+\pi/2$, which gives us the dynamical system family~\eqref{eq:RSJ-model-rescaled} on the torus $\mathbb T^2=\mathbb R^2_{\theta,\tau}\slash2\pi\mathbb Z^2$:  
\begin{equation*}
    \frac{d\theta}{d\tau}=\frac{\cos\theta+B+A\sin\tau}{\omega}.
\end{equation*}
 By definition, the \emph{rotation number} of the system~\eqref{eq:RSJ-model-rescaled}, see \cite{arn}, is given by
\begin{equation*}
    \rho=\lim\limits_{T\rightarrow+\infty}\frac{\theta(T)}{T}\in\mathbb{R}.
\end{equation*}
Here $\theta(\tau)$ is an arbitrary solution of equation \eqref{eq:RSJ-model-rescaled}; the rotation number is independent of its choice, see \cite{arn}. 
In families of dynamical systems on the 2D torus, typically each rational rotation number value $\rho$ corresponds to a phase-lock area. See a more precise result in \cite{glutryb}. But for the system~\eqref{eq:RSJ-model-rescaled} it is known that \emph{rotation number quantization effect} takes place: phase locking occurs only for \emph{integer} values of the rotation number~\cite{buchstaber2010rotation}. Omitting the details, it happens because the Poincar\'{e} first return map $h$ of the system~\eqref{eq:RSJ-model-rescaled}, which in our case is the time $2\pi$ flow self-map of the coordinate $\theta$-circle, is a M\"{o}bius transformation that maps the unit circle to the unit circle. The M\"{o}bius transformations can be classified in terms of conjugacy classes of the $\mathrm{SL}(2,\mathbb{R})$ group. From a physical point of view, this rotation number $\rho$ is equal up to a numerical factor (that contains Planck constant) to the long time averaged voltage on JJ, driven by the external current with dimensionless DC part $B$ and dimensionless AC part $A\cos\omega t$ (see~\cite{renne1974some,waldram1982alternative} for the physical interpretation). If one fixes $A$ and $\omega$ and studies the rotation number $\rho$ as a function of $B$, so-called Cantor staircase will appear. For the RSJ model this staircase is known as \emph{Shapiro steps}, see~\cite{shapiro1963josephson}.

In terms of this M\"{o}bius transformation, we effectively deal with the \emph{discrete} dynamical system on the unit circle $S^1$. Now, consider the trajectory of~\eqref{eq:RSJ-model-rescaled} with a given initial condition $\theta(0)=\theta_0$. If the M\"{o}bius transformation has an attractive fixed point on the unit circle $S^1$, then the Lyapunov exponent $\Lambda$ of the corresponding discrete dynamical system will be negative. In fact, if the M\"{o}bius transformation is hyperbolic, then it has two fixed points on $S^1$:  one of them is attractive; the other is repulsive. The corresponding Lyapunov exponents are $\Lambda_{-}<0$ and $\Lambda_{+}>0$ and $\Lambda_{+}=-\Lambda_{-}$. These Lyapunov exponents are logarithms of the multipliers of the M\"{o}bius  transformation at its fixed points. The interiors of the phase-lock areas correspond to non-zero values of the Lyapunov exponent $\Lambda_{+}$ (or $\Lambda_{-}$) (the detailed explanation is given in Appendix B of~\cite{alexandrov2025}). At the Fig.~\ref{fig:phase-lock-constr}, the phase-lock areas of~\eqref{eq:RSJ-model-rescaled} are shown. They are colored white. This picture has two main features. There exist points in the $(B,A)$ plane, lying outside the abscissa $B$-axis, where the intersections of phase-lock areas with horizontal lines shrink to the point; they are called \emph{constrictions}.  In the phase-lock areas corresponding to non-zero integer rotation numbers there exist similar self-intersection points in the abscissa axis $A=0$; they  are called \emph{growth points}. Infinite sequences of constrictions in the phase-lock areas are seen both in experiments and numerical simulations, for instance, see \cite[fig. 3]{panghotra2020giant} (experiment and numerical simulation) and \cite[p. 193, fig. 11.4]{lich-rus}, 
\cite[p. 88, fig. 5.2]{likh-ulr},  \cite[p. 339, fig. 11.4]{lich} (which refers to the paper \cite{ls}). Their existence was rigorously proved in a joint paper by A.V. Klimenko and O.L. Romaskevich~\cite{klim-rom}. Properties of constrictions are investigated in~\cite{glutsyuk2014adjacency,glutsyuk2019constrictions,glutsyuk2023-germs,bibilo2022families}. It was proved in \cite{bibilo2022families} that for every integer value $r$ of the rotation number all the constrictions in the corresponding phase-lock area lie in one vertical line with abscissa $B=r\omega$. This constriction alignment phenomena can be also seen in the above-mentioned pictures in physics books. It was conjectured in \cite{glutsyuk2014adjacency}, where partial positive results were obtained. 

The RSJ model~\eqref{eq:RSJ-model-rescaled} can be rewritten in terms of DCHE (\ref{heun}), as shown in~\cite{buchstaber2017monodromy,buchstaber2013explicit,buchstaber2015holomorphic}. In terms of DCHE, the mentioned M\"{o}bius transformation is conjugated to the projectivized monodromy of the DCHE matrix solution around zero. Inspired by this correspondence, in the present paper we introduce two new families of dynamical systems on torus. These two families admit equivalent description by, respectively, GHE and CHE. The first family, which corresponds to GHE, is family~\eqref{dyn3}. It can be considered as a deformation of the family~\eqref{eq:RSJ-model-rescaled}. The second family, which corresponds to CHE, is family~\eqref{dyn4}. The  families of Heun equations corresponding to families (\ref{dyn3}) and (\ref{dyn4}) are  (\ref{heunt3}) and (\ref{heunc4}) respectively.

\subsection{Plan of the paper}\label{subsec:main-results}

The two above-mentioned new families~\eqref{dyn3},~\eqref{dyn4} of dynamical systems corresponding to GHE and CHE are introduced in~\autoref{subsec:gen-non-confl} and~\autoref{subsec:confl-case} respectively, together with the corresponding Heun equations. The rotation number quantization effect in family (\ref{dyn3}) is proved in~\autoref{subsec:gen-non-confl}. The results stating that the dynamical systems in question are indeed equivalent to the corresponding Heun equations are: Theorem~\ref{heuntor} and Proposition~\ref{prodyn1} for family~\eqref{dyn3} and the GHE; Theorem \ref{thc} and Proposition~\ref{prodyn2} for family~\eqref{dyn4} and the CHE. Theorems~\ref{heuntor} and~\ref{thc} are stated and proved, respectively, in~\autoref{sec:lin-sys-Heun} and~\autoref{sec:confluent-case}. Propositions~\ref{prodyn1} and~\ref{prodyn2} are stated and proved, respectively, in~\autoref{subsec:four-sing-proof} and~\autoref{subsec:three-sings}. 

In~\autoref{sec:lin-sys-Heun} we introduce the general class of linear systems of differential equations on the Riemann sphere on vector function $Y=(Y_1(z),\,Y_2(z))$ with four Fuchsian singularities $0$, $\a$, $\beta$, $\infty$ corresponding to dynamical systems on the torus, namely, the so-called torus dynamical type linear systems. This means that their projectivizations, i.e., Riccati equations, are complexifications of dynamical systems on 2D torus: on the product of the unit circles in time and space Riemann spheres. In this case 
$\beta=\overline{\a}^{-1}$, and we rescale the variable so that $\a>0$. Proposition \ref{psym} gives a criterion for being a system of torus dynamical type in terms of residue matrix relations. We deal with a special subclass of the latter torus dynamical type systems, with the residue matrix at the origin being diagonal with zero second eigenvalue. This subclass is described by Proposition \ref{ptdiag}. The above-mentioned Theorem \ref{heuntor} describes explicitly those of the latter systems that are equivalent to the general Heun equation (GHE) on the second component
$E=Y_2(z)$. Theorem~\ref{thc} stated and proved in~\autoref{sec:confluent-case} is its analogue for confluent Heun equations (CHE). 

In~\autoref{subsec:four-sing-proof} and~\autoref{subsec:three-sings} we deduce dynamical systems~\eqref{dyn3} and~\eqref{dyn4} respectively from projectivizations of the above-mentioned linear systems. In~\autoref{subsec:dRSJ-family} we state and prove Theorem \ref{montr} and its Corollary \ref{cmontr} saying that in family (\ref{dyn3}) there are no constrictions. And also Theorem \ref{growd3}, which deals with the family (\ref{dyn3}) restricted to the hyperplane $\{A=0\}$ equipped with coordinates $(B,D)$. It describes analogues of the growth points of phase-lock areas lying in the abscissa $B$-axis. The proposition \ref{crot} stated and proved in the same place gives the formula for the rotation number of the family restricted to the $B$-axis. 

Preliminaries on the Heun equations are given in~\autoref{sec:lin-sys-Heun}. In~\autoref{subsec:open-problems} we present a list of open problems. Another open problem on torus dynamical type linear systems, is presented at the end of the preambula to~\autoref{sec:lin-sys-Heun}.

\section{From Heun equations to dynamical system on torus}\label{sec:Heun-to-torus}

\subsection{Generic non-confluent case}\label{subsec:gen-non-confl}

The corresponding family of dynamical systems is 
\begin{equation}\label{dyn3}
    \frac{d\theta}{d\tau}=\frac{\cos\theta+B+A\sin\tau}{\omega(1-\delta\cos\tau)}+D, \ \  \omega\in\mathbb R\setminus\{0\}, \ \delta\in(0,1).
\end{equation}
Note that in the case, when $\delta=0$, this system coincides with the RSJ model~\eqref{eq:RSJ-model-rescaled}. Keeping in mind this fact, hereafter we call the family~\eqref{dyn3} as \emph{deformed RSJ model of GHE type} (for brevity, we simply use denotation dRSJ).

The variable change
\begin{equation*}
    \Phi=e^{i\theta},\quad z=e^{i\tau}
\end{equation*}
and extension to complex domain transforms family~\eqref{dyn3} to family of Riccati equations
~\eqref{ric1}:
\begin{equation}\Phi'=-\frac{b(\difa)}{\zaal}(1+\Phi^2)-\left(\frac{\bar\nu+c}{z-\a}-\frac{\nu+c}{z-\baml}+
\frac{\nu}z\right)\Phi, \ \ \a>0, \ b, c\in\mathbb{R}, \ b\neq0,\label{ric3}\end{equation}
$$\omega=\frac{\a+\baml}{2b(\difa)}, \ \ \  \ D=-\re\nu, \ \ \ \ \delta=\frac2{\a+\baml}\in(0,1),$$
\begin{equation} 
B=\frac{c+\re\nu}{2b}, \ \ \ \ \ \ \  A=\frac{\im\nu}{b(\difa)}.\label{coefd3}\end{equation}
Riccati equations~\eqref{ric3} are projectivizations of 
linear systems of type~\eqref{kd}:
\begin{equation} Y'=\left(\frac{\left(\begin{matrix}\nu & 0\\ 0 & 0\end{matrix}\right)}z+
\frac{\left(\begin{matrix}\phi & b\\ -b & \phi-\bar\nu-c\end{matrix}\right)}{z-\alpha}  + \frac{\left(\begin{matrix}\psi-\nu-c & 
-b\\ b & \psi\end{matrix}\right)}{z-\alpha^{-1}}\right)Y, \ \ \a\in\mathbb{R}_+, \   b\in\mathbb{R}_{\geq0}, \   c\in\mathbb{R}.  \label{kd3}\end{equation}
Namely, $\Phi(z)$ is a solution of the Riccati equation~\eqref{ric1}, if and only if $\Phi(z)=\frac{Y_2(z)}{Y_1(z)}$, where
$Y=(Y_1(z), Y_2(z))$ is a solution of~\eqref{kd3}. 
Riccati equation~\eqref{ric3}, and hence~\eqref{dyn3}, are independent on the diagonal parameters $\phi$, $\psi$ of a linear system, which may be chosen arbitrarily. One can choose them in four possible ways so that the system~\eqref{kd} is equivalent to a Heun equation on the second component $E=Y_2(z)$ of the vector function $Y=(Y_1,Y_2)$. Namely, this holds if and only if $\phi$, $\psi$ are chosen to satisfy the quadratic equations~\eqref{psi}: 
\begin{equation}b^2=\phi(\bar\nu+c-\phi)=\psi(\nu+c-\psi)\in\mathbb{R}_{\geq0}; \ \ \text{ that is,  either } \ \psi=\bar\phi, \ \ \text{ or } \ \psi=\nu+c-\bar\phi.\label{psi3}\end{equation}
The corresponding Heun equation is~\eqref{heunt}: 
\begin{equation}\label{heunt3}
    z(z-\baml)(z-\a)E''+(-\nu(z-\alpha)(z-\baml)+q z(z-\baml)+sz(z-\a))E'+(uz+d)E=0,
\end{equation}
    $$q=\bar\nu+c+1-2\phi, \ \ s=\nu+c+1-2\psi, \ u=(\bar\nu+c-\phi-\psi)(c+1-\phi-\psi), \ \ d=\nu((c+\bar\nu-\phi)\baml-\psi\a);$$
$$s=\begin{cases} \bar q, \ \text{ if } \ \psi=\bar\phi\\
s=2-\bar q, \ \text{ if } \psi=\nu+c-\bar\phi.\end{cases}$$
\begin{remark} For a given pair of parameters $(\phi,\psi)=(\phi_0,\psi_0)$ satisfying~\eqref{psi3} the Heun equations~\eqref{heunt3} corresponding to the three other parameter pairs 
    $$(\phi_1,\psi_1)=(\phi_0, \nu+c-\psi_0), \  (\phi_2,\psi_2)=(\bar\nu+c-\phi_0,\ \psi_0), \ 
(\phi_3,\psi_3)=(\bar\nu+c-\phi_0,\ \nu+c-\psi_0)$$
satisfying~\eqref{psi3} are equations on functions $E_j(z)$, $j=1,2,3$,  
obtained from the Heun equation on $E(z)$ corresponding to $(\phi_0,\psi_0)$ by 
gauge transformations 
    $$E_j(z)=(z-\a)^{\phi_j-\phi_0}(z-\baml)^{\psi_j-\psi_0}E(z).$$
\end{remark}

\begin{theorem} In family~\eqref{dyn3} the Rotation Number Quantization Effect from \cite{buchstaber2010rotation} holds: phase-lock areas exist only for integer rotation number values. 
\end{theorem}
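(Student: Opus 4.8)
The plan is to show that the deformation producing (\ref{dyn3}) preserves the single structural feature responsible for integer quantization in the RSJ model, namely that the Poincaré map of the flow over one period $\tau\in[0,2\pi]$ is a Möbius transformation of a circle. Once this is established, the argument of Buchstaber--Karpov--Tertychnyi \cite{buchstaber2010rotation} applies essentially verbatim. Concretely, I would first record that, through the substitution $\Phi=e^{i\theta}$, $z=e^{i\tau}$, system (\ref{dyn3}) becomes the Riccati equation (\ref{ric3}), which by construction is the projectivization of the linear $2\times2$ system (\ref{kd3}) on $\cp^1$. The key consequence is that the $z$-transport of (\ref{ric3}) along any path avoiding the singular points $0,\alpha,\alpha^{-1},\infty$ is fractional-linear, being the projectivization of the linear transport map of (\ref{kd3}); the reparametrizing factor $1-\delta\cos\tau$ and the drift $D$ in (\ref{dyn3}) are already absorbed into this correspondence and do not affect it.

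Next I would verify the two hypotheses needed to turn this into a statement about a circle diffeomorphism. First, as real $\tau$ runs over $[0,2\pi]$ the point $z=e^{i\tau}$ traverses the unit circle once, and this circle meets none of the singular points: indeed $0,\infty\notin S^1$, while $\alpha,\alpha^{-1}$ are real positive with $\alpha\neq1$, the latter because $\delta=\frac2{\alpha+\alpha^{-1}}\in(0,1)$ forces $\alpha+\alpha^{-1}>2$. Hence the Poincaré map $\mathcal P$ of (\ref{dyn3}) coincides, under $\Phi=e^{i\theta}$, with the monodromy of (\ref{ric3}) along $|z|=1$ and is therefore a Möbius transformation. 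Second, since (\ref{dyn3}) is a real vector field, its flow preserves the set $\{|\Phi|=1\}$; thus $\mathcal P$ is a Möbius transformation preserving the unit $\Phi$-circle, i.e. an orientation-preserving circle diffeomorphism conjugate inside the Möbius group to an element of $\psl(2,\mathbb R)$.

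With $\mathcal P$ identified as a Möbius circle diffeomorphism I would finish by the standard trichotomy for $\psl(2,\mathbb R)$. An elliptic $\mathcal P$ is conjugate to a rigid rotation, so any rational rotation number $p/q$ with $q\geq2$ comes from a finite-order map and is destroyed by arbitrarily small perturbations, hence cannot fill an open set in parameter space. A hyperbolic or parabolic $\mathcal P$ has at most two fixed points on $S^1$ and rotation number $\equiv0\pmod1$; a putative period-$q$ orbit with $q\geq2$ would give $q$ distinct fixed points of the Möbius map $\mathcal P^q$, forcing $\mathcal P^q=\mathrm{id}$ (for $q\geq3$, three or more fixed points) or an order-two elliptic $\mathcal P$ (for $q=2$), both structurally unstable. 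Thus on the interior of any level set of the rotation number the map $\mathcal P$ is hyperbolic with $\mathrm{rot}(\mathcal P)\equiv0\pmod1$; since the integer part of $\rho$ is recorded by the number of full windings of $\theta$ over one $\tau$-period, this yields $\rho\in\zz$ on every phase-lock area.

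The main obstacle is not the trichotomy, which is classical, but verifying cleanly that the deformation has not destroyed the Möbius property: the content of the theorem is that the extra denominator $1-\delta\cos\tau$ and the drift $D$ still arise from a genuine linear system on $\cp^1$, which is exactly what the representation (\ref{kd3}) supplies. A secondary point requiring care is the stability step: one must argue that a phase-lock area, having nonempty interior, forces $\mathcal P$ into the hyperbolic (or boundary parabolic) class rather than the elliptic one, so that the at-most-two-fixed-points count can be invoked. This is where the robustness of integer rotation numbers, as opposed to general rationals, genuinely uses the fractional-linearity of $\mathcal P$.
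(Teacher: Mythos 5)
Your first step --- passing to $\Phi=e^{i\theta}$, $z=e^{i\tau}$, identifying the Poincar\'e map with the monodromy of the Riccati equation (\ref{ric3}), and deducing that it is a M\"obius transformation preserving the unit circle because (\ref{ric3}) is the projectivization of the linear system (\ref{kd3}) --- is exactly the first half of the paper's argument, and your check that $|z|=1$ avoids the singularities (since $\delta\in(0,1)$ forces $\alpha+\alpha^{-1}>2$, so $\alpha\neq1$) is a worthwhile detail the paper leaves implicit. The divergence is in the second half: the paper does not rerun the trichotomy; it observes that the right-hand side of (\ref{dyn3}) is \emph{strictly monotonous in $B$} (the denominator $\omega(1-\delta\cos\tau)$ has constant sign) and then cites the general theorem of \cite{buchstaber2010rotation} that quantization holds in any family of M\"obius circle diffeomorphisms strictly monotonous in a parameter.

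This is where your argument has a genuine gap: monotonicity in $B$ is never invoked, and without it the conclusion fails. Your key step is ``finite-order elliptic maps are structurally unstable, hence cannot fill an open set in parameter space,'' but structural instability in the space of all circle diffeomorphisms does not prevent a degenerate phenomenon from persisting on an open subset of a \emph{specific} finite-parameter family --- a family that is locally constant in the relevant directions (e.g.\ $d\theta/d\tau=1/2$ identically) has a non-integer rotation number on all of parameter space. To close the gap you must show that the family genuinely moves the map: since the lifted Poincar\'e maps satisfy $\mathcal{P}_{B'}(x)>\mathcal{P}_B(x)$ for all $x$ whenever $B'>B$ (say $\omega>0$), one gets $\mathcal{P}_{B'}^q(x)>\mathcal{P}_B^q(x)$, so on each line $A=\mathrm{const}$ the identity $\mathcal{P}_B^q=\operatorname{id}$ can hold for at most one value of $B$, killing rational non-integer plateaux. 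A second omission: you treat only rational $p/q$ and the hyperbolic/parabolic case, but a level set of an \emph{irrational} rotation number could a priori have interior; ruling this out again uses monotonicity, via the standard fact that for a strictly monotone family the rotation number is strictly increasing at any parameter where it is irrational (every elliptic M\"obius map being conjugate to a genuine rotation). With these two additions your route becomes a self-contained proof of the cited general theorem rather than an appeal to it; as written, it does not yet prove the statement.
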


\begin{proof} The family is strictly increasing in $B$. It is a projectivization of family of linear systems, and 
hence, has M\"obius Poincar\'e map. It is known, see \cite{buchstaber2010rotation}, that the Rotation Number Quantization Effect holds in any family of M\"obius circle diffeomorphisms that is  strictly monotonous in some parameter. This implies the statement of the theorem. 
\end{proof}

\subsection{Confluent case}\label{subsec:confl-case}

The corresponding family of dynamical systems is
\begin{equation}\label{dyn4}
    \frac{d\theta}{d\tau}=\frac{\cos\theta+B+A\sin\tau}{\omega(1-\cos\tau)}+D,
\end{equation}
The variable change $\Phi=e^{i\theta}$, $z=e^{i\tau}$ 
and extention to complex domain transforms family~\eqref{dyn3} to family of Riccati equations~\eqref{ric2}: 
\begin{equation}\label{ric4}
    \Phi'=-\frac{b}{(z-1)^2}(1+\Phi^2)+\left(-\frac{\nu}{z}-\frac{g}{(z-1)^2}+\frac{\nu-\bar\nu}{z-1}\right)\Phi, \ \  \ b\in\mathbb R\setminus\{0\}, \ g\in\mathbb{R},
\end{equation}
\begin{equation}\label{confcs4}
    \omega=\frac{1}{b}, \ \ B=\frac{g}{2b}, \ \ A=\frac{\im\nu}{b}, \ \ D=-\re\nu.
\end{equation}
Riccati equations~\eqref{ric4} are projectivizations of linear systems 
~\eqref{y'c}: 
\begin{equation} Y'=\left(\frac{\left(\begin{matrix}\nu & 0\\ 0 & 0\end{matrix}\right)}z+
\frac{\left(\begin{matrix}a_2 & b\\ - b & a_2-g\end{matrix}\right)}{(z-1)^2}  + \frac{\left(\begin{matrix} 
a_1 & 0\\ 0 & a_1+\nu-\bar\nu\end{matrix}\right)}{z-1}\right)Y, \ \ \ b\in\mathbb{R}\setminus\{0\}, \ \ g\in\mathbb{R}:\label{y'c4}\end{equation}
$\Phi(z)$ is a solution of Riccati equation~\eqref{ric4}, if and only if $\Phi(z)=\frac{Y_2(z)}{Y_1(z)}$, where 
$Y=(Y_1(z), Y_2(z))$ is a solution of~\eqref{y'c4}. Riccati equation~\eqref{ric4}, and hence,~\eqref{dyn4}, are independent on the diagonal parameters $a_1,a_2\in\mathbb{C}$ of linear system, which may be chosen arbitrarily. 
Generically, one can choose them in two possible ways in order that system~\eqref{y'c4} be equivalent to a Heun equation on the second component 
 $E=Y_2(z)$ of vector function $Y=(Y_1,Y_2)$. Namely, this holds if and only if
\begin{equation} b^2=a_2(g-a_2), \ \ \ a_2(\nu-\bar\nu)+a_1(2a_2-g)=0.\label{ech4}\end{equation}
Then the corresponding renormalized confluent Heun equation is~\eqref{heunc}: 
\begin{equation}z(z-1)^2E''+(pz(z-1)+qz+s)E'+(uz+d)E=0,\label{heunc4}\end{equation}
$$p=\bar\nu+2-2\nu-2a_1, \ \ q=\nu+g-2a_2, \ \ s=-\nu,$$
 \begin{equation} u=(a_1+\nu-1)(a_1+\nu-\bar\nu), \ \ d=\nu(a_2-g-a_1-\nu+\bar\nu).\label{coefl4}
 \end{equation}
\begin{remark} 
System of equations~\eqref{ech4} in $(a_1,a_2)\in\mathbb{C}^2$ has a solution, if and only if 

- either $b\neq\pm\frac g2$; then it has two different 
solutions;

- or $b=\pm\frac g2$ and $\frac g2(\nu-\bar\nu)=0$; then $a_2=\frac g2$ is the  unique, double root of the first equation and 
each pair $(a_1,a_2)$ with $a_2=\frac g2$ and arbitrary $a_1\in\mathbb{C}$ is a solution. 

In the case, when $b=\pm\frac g2$ and $\frac g2(\nu-\bar\nu)\neq0$, system~\eqref{ech4} has no solution $(a_1,a_2)$. 

For  given two solutions $(a_1,a_2)$, $(\wt a_1,\wt a_2)$ of~\eqref{ech4} the corresponding confluent 
Heun equations on functions $E$ and $\wt E$ are obtained one from the other by gauge transformation
$$\wt E(z)=(z-1)^{\wt a_1-a_1}e^{\frac{a_2-\wt a_2}{z-1}}E(z).$$
\end{remark}

\subsection{Open problems}\label{subsec:open-problems}

Consider the dRSJ family of dynamical systems~\eqref{dyn3} with fixed $\omega$, $\delta$, $D$ and variable parameters $(B,A)\in\mathbb{R}^2$. 

\begin{problem} Study the portrait of phase-lock areas in $\mathbb{R}^2_{B,A}$ of thus obtained two-parameter family of 
dynamical systems. For fixed $\delta\in[0,1)$ and $D\in\mathbb{R}$ study asymptotics of the phase-lock area portrait, as $\omega\to0$. This problem is open for $\delta=D=0$ as well.
\end{problem}

\begin{problem} Study asymptotics of the rotation number function $\rho(B,A)$ in family~\eqref{dyn3} with fixed $D$, as $\delta\to1$, i.e., in the confluent limit. Find the first and second main asymptotic terms. 
\end{problem}

\begin{problem} Describe those Heun equations~\eqref{heunt3},~\eqref{heunc4} related to dynamical systems~\eqref{dyn3},~\eqref{dyn4} 
that have 

- either polynomial, or rational solutions;

- or  meromorphic solutions on $\overline{\mathbb{C}}\setminus\{1\}$ (with possible essential singularity at $1$). 
\end{problem}

\begin{problem} Study dynamical systems on torus corresponding to isomonodromic families of general  Heun equations: foliation by isomonodromic leaves in the space of parameters of dynamical systems. Study asymptotic behavior of leaves. Apparently this should be related to properties of solutions of Painlev\'e 6 equations.
\end{problem}

\section{Linear systems and Heun equations}\label{sec:lin-sys-Heun}

Recall that the family of Heun equations with four distinct singularities $0$, $\alpha$, $\beta$, $\infty$ is the following family of second-order linear differential equations:
\begin{equation} z(z-\alpha)(z-\beta)E''+(p(z-\alpha)(z-\beta)+qz(z-\beta)+sz(z-\alpha))E'+(uz+d)E=0.
\label{heung}\end{equation}

The family of double confluent Heun equations is obtained from~\eqref{heung} by passing to the limit, as $\alpha\to\infty$, $\beta\to0$. It is the family 
\begin{equation} z^2E''+(-\mu z^2+cz+t)E'+(-az+\lambda)E=0\label{dheun}\end{equation}
\begin{remark} Family~\eqref{heung} depends on 7 parameters: $\alpha$, $\beta$, $c$, $d$, $u$, $v$, $\gamma$. 
But one of them, namely $\beta$, can be normalized to be equal to $\alpha^{-1}$  by rescaling the variable $z$. Thus, there are six essential parameters in~\eqref{heung}. 
Similarly,  in~\eqref{dheun} the coefficient $\mu$ can be normalized to be equal to one by rescaling the variable $z$, so there are four essential parameters in~\eqref{dheun}. All the parameters in question are complex. 
There are also different types of confluent Heun equations, when only two singularities (or three, or four singularities) collide to one singularity. These types are called respectively confluent, biconfluent and triconfluent Heun equations, respectively, see  
\cite{ronveaux1995heun, slavyanov-lay2000}.\end{remark}

Each Heun equation with four distinct singularities can be written as a two-dimensional Fuchsian linear system
\begin{equation}Y'=\left(\frac Kz+\frac{R_1}{z-\alpha}+\frac{R_2}{z-\beta}\right)Y; \ \ 
\ \ K, \ R_1, \ R_2 \ \text{ are 2x2-matrices.}\label{systh}\end{equation}
We study its projectivization, which is a Riccati equation on a $\overline{\mathbb{C}}$-valued function $\Phi(z)$: a holomorphic 
foliation on $\overline{\mathbb{C}}_{\Phi}\times\overline{\mathbb{C}}_z$ induced by~\eqref{systh} via tautological projection 
$$\mathbb{C}^2_{Y_1,Y_2}\setminus\{0\}\to\mathbb{CP}^1_{[Y_1:Y_2]}=\overline{\mathbb{C}}_{\Phi}, \ \ \Phi=\frac{Y_2}{Y_1}.$$  
The latter Riccati equation has following property: all its solutions $\Phi(z)$ have the form $\Phi(z)=\frac{Y_2(z)}{Y_1(z)}$, where $Y=(Y_1(z), Y_2(z))$ is a vector solution of system~\eqref{systh}. 

\begin{definition} 

A meromorphic linear system of first order differential equations on $\overline{\mathbb{C}}_z$ on a vector function $Y(z)=(Y_1(z),Y_2(z))$  is said to have {\bf torus dynamical type,} if the corresponding Riccati equation in the function $\Phi(z)=\frac{Y_2(z)}{Y_1(z)}$  has invariant torus $\mathbb{T}^2:=S^1_\Phi\times S^1_z$, $S^1_w:=\{|w|=1\}\subset\overline{\mathbb{C}}$. This means that for every $(\Phi_0,z_0)\in\mathbb{T}^2$ the solution $\Phi(z)$ of the Riccati equation with the initial condition $\Phi(z_0)=\Phi_0$ takes values in the unit circle $S_\Phi^1$ along the unit circle $S_z^1$. 

A \emph{scalar linear second order differential equation} is said to be of {\it torus dynamical type}, if there exists a linear system of torus dynamical type such  that for every vector solution $Y(z)=(Y_1(z),Y_2(z))$ of the system, the component $Y_2(z)$ is a solution of the scalar equation.

\end{definition}

\begin{example} Consider linear systems of type 
\begin{equation}\label{tty}
    Y'=\left(\frac{\diag(-\frac s2,0)}{z^2}+\frac{\mathcal{B}}z+\diag(0, \frac s2)\right)Y, \ \ \ 
    \mathcal{B}=\left(\begin{matrix} -\ell &-\frac a2\\ \frac a2 & 0\end{matrix}\right), \ \ s,a\in\mathbb{R}.
\end{equation}
Their family yields an equivalent description of model of overdamped Josephson junction, see~\cite{bkt1,buchstaber2010rotation,buchstaber2015holomorphic, Foote, foott,  IRF}, \cite[subsection 3.2]{bg}.  It is known to be of the torus dynamical type and is equivalent to the following family of special double-confluent Heun equations.
\begin{equation}\label{heun}
    z^2E''+((\ell+1)z+\mu(1-z^2))E'+(\lambda-\mu(\ell+1)z)E=0,
\end{equation}
$$\mu:=\frac s2=\frac A{2\omega}, \ \ \lambda:=\frac{a^2-s^2}4.$$
Namely, for every solution $Y=(Y_1(z),Y_2(z))$ of system~\eqref{tty} the function $E=Y_2(z)$ satisfies~\eqref{heun} and vice versa: each solution of Heun equation~\eqref{heun} generates a solution of system~\eqref{tty}. See an implicit equivalent statement in \cite[subsection 3.2, p. 3869]{bg}. The relation of model of overdamped Josephson junction to special double confluent Heun equations was first observed  in \cite{tert2} and in \cite{bt0}. 
\end{example}

In subsection 3.1 we describe torus dynamical type Fuchsian linear systems with four singularities. We also show that such a system satisfying a genericity assumption is constant gauge equivalent to another torus dynamical type Fuchsian system with either upper-, or lower- triangular matrix $K$. 

We  also study the following problem. 

\begin{problem} Describe those torus dynamical type Fuchsian systems with four singularities on a vector-function 
$Y=(Y_1, Y_2)$ on $\overline{\mathbb{C}}$ 
 that are equivalent to Heun equations on $Y_2(z)$.  
That is, describe those 
Heun equations with four singularities that have torus dynamical type.
\end{problem}

In subsection 3.2 we present a first result towards its solution, treating the particular case, when the matrix 
$K$ is diagonal; we then renormalize $K$ to have zero second eigenvalue  multiplying the vector function $Y$ by a power of $z$. Our result yields an explicit  family~\eqref{heunt} 
of torus dynamical type Heun equations depending on five real parameters.

\subsection{Linear systems  of torus dynamical type with four singularities}

The starting point of their description is the following remark. 

\begin{remark} \label{remsym} A linear system has torus dynamical type, if and only if the corresponding Riccati equation 
is symmetric with respect to the map 
$(\Phi,z)\mapsto(\overline\Phi^{-1},\bar z^{-1})$. This is equivalent to the statement saying that the transformation 
\begin{equation} J:(Y_1,Y_2,z)\mapsto(\overline Y_2,\overline Y_1, \bar z^{-1})\label{barsym}
\end{equation}
preserves the linear system in question up to addition of a scalar matrix function. In particular, the singularity 
collection of a torus dynamical type linear system should be symmetric with respect to the unit circle in $\overline{\mathbb{C}}_z$. 
\end{remark}

Thus, we consider that the symmetric points $0$ and $\infty$ are singularities, 
and the other singularity pair $\alpha$, $\beta$ is also symmetric: $\beta=\bar\alpha^{-1}$. Then all the four singularities lie on the same line. 
Without loss of generality we consider this is a real line, hence 
$$\beta=\alpha^{-1} \ \text{ and } \ \alpha>0,$$
applying a rotation of the coordinate $z$. 

Everywhere below for a $2\times2$-matrix $K$ by $K^{tt}$ we denote the matrix obtained from $K$ by 
conjugation by the permutation matrix 
$$P:=\left(\begin{matrix} 0 & 1 \\ 1 & 0\end{matrix}\right).$$
Or equivalently, the matrix $K^{tt}$ is obtained from $K$ by permuting its diagonal terms and permuting 
its off-diagonal terms.

\begin{proposition} \label{psym} A linear system 
\begin{equation}Y'=\left(\frac{K}z +\frac{R_1}{z-\alpha}+\frac{R_2}{z-\alpha^{-1}}\right)Y\label{syst}, \ \ \ \ \ \alpha\in\mathbb R\setminus\{0\}
\end{equation}
is of torus dynamical type, if and only if the following matrix equalities hold  modulo $\mathbb{C} Id$, i.e., 
modulo scalar matrices, multiples of the identity:
\begin{equation}R_2=\overline R_1^{tt}, \ \ \ K+\overline K^{tt}+R_1+R_2=0.
\label{symrel}\end{equation}
\end{proposition}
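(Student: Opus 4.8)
The plan is to apply the criterion of Remark~\ref{remsym}: the system \eqref{syst} is of torus dynamical type if and only if the involution $J$ of \eqref{barsym} preserves it modulo addition of a scalar matrix function. So I would first compute, for an arbitrary solution $Y(z)$ of \eqref{syst}, the linear system satisfied by its $J$-image, and then compare the two coefficient matrices pole by pole modulo $\mathbb C\,\mathrm{Id}$, using that adding $f(z)\mathrm{Id}$ to the matrix leaves the Riccati projectivization unchanged.

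To carry out the computation I would decompose $J$ into three elementary operations. Writing $\zeta=\overline z^{-1}$, the conjugation step produces the Schwarz-reflected function $Y^*(\xi):=\overline{Y(\overline\xi)}$, which satisfies $(Y^*)'=M^*Y^*$ with $M^*(\xi)=\overline{M(\overline\xi)}$; the inversion $\xi=\zeta^{-1}$ contributes the chain-rule factor $-\zeta^{-2}$; and the coordinate swap is conjugation by the permutation matrix $P$, i.e.\ the operation $(\cdot)^{tt}$. Composing these, the $J$-image $W(\zeta)=P\,\overline{Y(\overline\zeta^{-1})}$ satisfies $W'=\widetilde M\,W$ with
$$\widetilde M(\zeta)=-\zeta^{-2}\,\overline{M(\overline\zeta^{-1})}^{tt},$$
where I use that $(\cdot)^{tt}$ and entrywise conjugation are commuting involutions that both fix $\mathbb C\,\mathrm{Id}$. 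Substituting the explicit $M(z)=\frac Kz+\frac{R_1}{z-\alpha}+\frac{R_2}{z-\alpha^{-1}}$ and rewriting $\widetilde M$ as a sum of simple poles is the computational heart. Since $\alpha\in\mathbb R$, the substitution $z=\overline\zeta^{-1}$ sends the pole set $\{0,\alpha,\alpha^{-1}\}$ to itself, and a short partial-fraction manipulation (using $\frac{1}{\zeta(1-\alpha\zeta)}=\frac1\zeta-\frac1{\zeta-\alpha^{-1}}$ and its analogue with $\alpha^{-1}$) yields
$$\widetilde M(\zeta)=\frac{-\overline K^{tt}-\overline{R_1}^{tt}-\overline{R_2}^{tt}}{\zeta}+\frac{\overline{R_2}^{tt}}{\zeta-\alpha}+\frac{\overline{R_1}^{tt}}{\zeta-\alpha^{-1}}.$$

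Finally I would impose $\widetilde M\equiv M\pmod{\mathbb C\,\mathrm{Id}}$. As both matrices have only simple poles at $0,\alpha,\alpha^{-1}$ and vanish at infinity, their difference is a scalar function times $\mathrm{Id}$ if and only if the residues agree modulo $\mathbb C\,\mathrm{Id}$. Comparing at $\alpha$ and $\alpha^{-1}$ gives $R_1\equiv\overline{R_2}^{tt}$ and $R_2\equiv\overline{R_1}^{tt}$, which are the same relation $R_2=\overline{R_1}^{tt}$ modulo scalars (applying conjugation and $(\cdot)^{tt}$ to one produces the other). Substituting $\overline{R_1}^{tt}\equiv R_2$ and $\overline{R_2}^{tt}\equiv R_1$ into the residue comparison at $0$ turns $K\equiv-\overline K^{tt}-\overline{R_1}^{tt}-\overline{R_2}^{tt}$ into $K+\overline K^{tt}+R_1+R_2\equiv0$, which is exactly the second relation of \eqref{symrel}. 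The main obstacle I anticipate is purely the bookkeeping in the matrix computation: tracking the interplay of the anti-holomorphic conjugation with the inversion to get the factor $-\zeta^{-2}$ and the reflected matrix $\overline{M(\overline\zeta^{-1})}$ right, and ensuring the partial-fraction step correctly redistributes the residue at $0$. The assumption $\alpha\in\mathbb R$ is what keeps the singularities symmetric under $z\mapsto\overline z^{-1}$ and makes this redistribution clean.
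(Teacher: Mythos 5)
Your proposal is correct and follows essentially the same route as the paper: apply the symmetry criterion of Remark~\ref{remsym}, compute the $J$-transformed system via conjugation, inversion and the $(\cdot)^{tt}$ swap, redistribute the poles by partial fractions to get $\frac{-\overline K^{tt}-\overline R_1^{tt}-\overline R_2^{tt}}{z}+\frac{\overline R_2^{tt}}{z-\alpha}+\frac{\overline R_1^{tt}}{z-\alpha^{-1}}$, and compare residues modulo $\mathbb C\,\mathrm{Id}$. The only difference is presentational (you make the residue-by-residue comparison and the equivalence of the two off-axis conditions explicit), which is a harmless refinement of the paper's argument.
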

\begin{proof} Let us find the image of system~\eqref{syst} under the involution $J$ given by~\eqref{barsym}. 
The transformation $(Y_1,Y_2,z)\mapsto(Y_2,Y_1,z)$ replaces the matrices $K$, $R_j$ in~\eqref{syst} by 
$K^{tt}$, $R_j^{tt}$ respectively. Replacing the vector function values and $z$ by their complex conjugates, 
we set $w:=\bar z$, $\wt Y_j:=\overline Y_{2-j}$,  
replaces the matrices of system by their complex conjugates, since the singularities $\alpha^{\pm1}$ are real. This  
yields system 
\begin{equation}\frac{d\wt Y}{dw}=\left(\frac{\overline K^{tt}}w +\frac{\overline R_1^{tt}}{w-\alpha}+\frac{\overline R_2^{tt}}{w-\alpha^{-1}}\right)\wt Y, \ \ \ \wt Y=(\wt Y_1,\wt Y_2)=(\overline Y_2, \overline Y_1).\label{syst2}
\end{equation}
Changing the variable $w$ to $\wt z:=w^{-1}$ yields 
$$\frac{d\wt Y}{d\wt z}=-\frac1{\wt z^2}\left(\wt z \overline K^{tt} +\frac{\wt z\overline R_1^{tt}}{1-\alpha\wt z}+\frac{\wt z\overline R_2^{tt}}{1-\alpha^{-1}\wt z}\right)\wt Y.$$
Simplifying the latter right-hand side with decomposition of ratios into elementary ones yields
\begin{equation}\frac{d\wt Y}{d\wt z}=\left(-\frac{\overline K^{tt}+\overline R_1^{tt}+\overline R_2^{tt}}{\wt z} +\frac{\overline R_2^{tt}}{\wt z-\alpha}+\frac{\overline R_1^{tt}}{\wt z-\alpha^{-1}}\right)\wt Y.\label{syst3}\end{equation}
This is the system obtained from~\eqref{syst} by the involution $J$. Together with Remark \ref{remsym}, this 
implies that for a system~\eqref{syst} being of torus dynamical type is equivalent to the statement saying 
that relations~\eqref{symrel} hold modulo $\mathbb{C} Id$. The proposition is proved.
\end{proof}
\begin{remark}
There is a class of constant gauge transformations $\wt Y=HY$, $H\in GL_2(\mathbb{C})$, that preserves 
the class of torus dynamical type linear systems. Namely, this holds if and only if $H\in U(1,1)$ up to scalar factor, i.e., if 
the operator $H$ preserves the cone $\{|Y_1|=|Y_2|\}$. Two systems one obtained from the other by a gauge 
transformation of the above type will be called {\it $U(1,1)$-gauge equivalent.}
\end{remark}
 
\begin{proposition} Every torus dynamical type linear system~\eqref{syst} such that 
\begin{equation} \text{the matrix } K \text{ has no eigenvector } v=(v_1,v_2) \text{ with } |v_1|=|v_2|\label{eigen}
\end{equation}
is $U(1,1)$-gauge equivalent to a system of torus dynamical type with 
the matrix $K$ being either upper, or lower-triangular. 
\end{proposition}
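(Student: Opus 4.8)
\subsection*{Proof proposal}

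The plan is to realize a $U(1,1)$-gauge transformation $\wt Y=HY$ as the conjugation $K\mapsto HKH^{-1}$ (and $R_j\mapsto HR_jH^{-1}$) of the coefficient matrices, and then to recall that triangularizing a $2\times2$ matrix by conjugation is the same as moving an eigenvector onto a coordinate axis. Indeed $\wt K:=HKH^{-1}$ is upper-triangular precisely when $e_1=(1,0)$ is its eigenvector, and lower-triangular precisely when $e_2=(0,1)$ is. Since the eigenvectors of $\wt K$ are the $H$-images of those of $K$ (from $\wt K(Hv)=HKv=\lambda Hv$), the whole task reduces to choosing $H\in U(1,1)$ that sends a chosen eigendirection of $K$ to $[e_1]$ or to $[e_2]$ in $\cp^1_{[Y_1:Y_2]}=\overline{\mathbb{C}}_\Phi$.

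First I would set up the geometry on $\overline{\mathbb{C}}_\Phi$. The cone $\{|Y_1|=|Y_2|\}$ projects to the unit circle $S^1_\Phi$, its interior $\{|Y_1|>|Y_2|\}$ to the open disk $\{|\Phi|<1\}$, and its exterior to $\{|\Phi|>1\}$. By the Remark preceding this proposition, the image of $U(1,1)$ in the Möbius group is exactly $PU(1,1)$, the group of holomorphic automorphisms of the disk fixing $S^1_\Phi$; since it preserves the Hermitian form of signature $(1,1)$, hence its sign, it preserves the disk and acts transitively both on the open disk and on the open exterior. The coordinate axes sit as $[e_1]\leftrightarrow\Phi=0$, the center of the disk, and $[e_2]\leftrightarrow\Phi=\infty$, an exterior point.

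The argument then runs as follows. Over $\mathbb{C}$ the matrix $K$ has an eigenvector $v$; the genericity hypothesis \eqref{eigen} forbids $|v_1|=|v_2|$, so $[v]$ lies either in the open disk or in the open exterior (in particular $K$ is not scalar). If $[v]$ is interior, transitivity yields $H\in U(1,1)$ with $H[v]=[e_1]$, so $Hv$ is an eigenvector of $\wt K=HKH^{-1}$ proportional to $e_1$ and $\wt K$ is upper-triangular; if $[v]$ is exterior, the same transitivity yields $H$ with $H[v]=[e_2]$, making $\wt K$ lower-triangular. In either case the conjugated system is again of torus dynamical type, because $U(1,1)$-gauge transformations preserve this class by the Remark, and the proof is complete.

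The routine verifications are the transitivity of $PU(1,1)$ on the disk and on the exterior, realized explicitly by the automorphisms $\Phi\mapsto\frac{\Phi-a}{1-\bar a\Phi}$ with $|a|<1$, which carry an arbitrary interior point to $0$ and, taking $a=1/\bar b$, an arbitrary exterior point $b$ to $\infty$, together with the bookkeeping that the eigenvector of $HKH^{-1}$ is $Hv$. The one point demanding care, and the main obstacle, is confirming that \eqref{eigen} is exactly what guarantees a usable eigendirection off $S^1_\Phi$: without it an eigenvector could land on $S^1_\Phi$, where no element of $PU(1,1)$ moves it to a coordinate axis, and the triangularization would fail.
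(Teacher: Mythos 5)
Your proposal is correct and follows essentially the same route as the paper's proof: project the eigenline of $K$ to a point of $\overline{\mathbb{C}}_\Phi$, observe that hypothesis (\ref{eigen}) keeps it off the unit circle, and move it to $0$ or $\infty$ by a disk-preserving M\"obius transformation lifted to $H\in U(1,1)$, which triangularizes $K$ by conjugation. Your version merely spells out the transitivity of $PU(1,1)$ and the preservation of the torus dynamical type, which the paper leaves implicit.
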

\begin{proof} Consider the tautological projection $\mathbb{C}^2\setminus\{0\}\to\mathbb{CP}^1_{[Y_1:Y_2]}=\overline{\mathbb{C}}_w$, 
$w=\frac{v_2}{v_1}$. The matrix $K$ has at least one eigenline. It is projected to a point $w_0\in\overline{\mathbb{C}}$ 
 that does not lie in the unit circle $\{|w|=1\}$. 
Applying a M\"obius transformation preserving the unit disk, we can send $w_0$ to either $0$, or $\infty$. 
The   operator $H$ whose projectivization is the latter M\"obius 
transformation conjugates $K$ to a triangular matrix. Then the corresponding variable change 
$\wt Y=HY$ satisfies the statement of the proposition.
\end{proof}

\subsection{A special class of  torus dynamical type systems with diagonal matrix \texorpdfstring{$K$}{K} and Heun equations}\label{subsec:spec-case}

Here we deal with the special class of torus dynamical type linear systems~\eqref{syst}, i.e., satisfying~\eqref{symrel} modulo $\mathbb{C} Id$, with diagonal matrix $K$. 
Applying scalar variable change $Y\mapsto z^\mu Y$ we can kill  its lower diagonal term: thus we get $K_{22}=0$. 
\begin{proposition} \label{ptdiag} Each torus dynamical type system~\eqref{syst} with $K=\diag(\nu,0)$ has form  
\begin{equation}Y'=\left(\frac{\left(\begin{matrix}\nu & 0\\ 0 & 0\end{matrix}\right)}z+
\frac{\left(\begin{matrix}\phi & b\\ -\bar b & \phi-\bar\nu-c\end{matrix}\right)}{z-\alpha}  + \frac{\left(\begin{matrix}\psi-\nu-c & 
-b\\ \bar b & \psi\end{matrix}\right)}{z-\alpha^{-1}}\right)Y, \ \ \ \alpha, c\in\mathbb{R},\ \ \alpha\neq0, \label{kdiag}\end{equation}
where the other parameters $\nu$, $b$, $\phi$, $\psi$ are complex numbers.
\end{proposition}
\begin{proof} The sum of  lower (upper) triangular elements of the matrices $R_1$ and $R_2$ 
should be zero, by the off-diagonal part of the second equation in 
~\eqref{symrel} and since $K$ is diagonal. Thus, 
 \begin{equation} b:=R_{1,12}=-R_{2,12}, \ \ R_{1,21}=-R_{2,21}.\label{tri0}\end{equation}
 On the other hand, the first equation in~\eqref{symrel} is 
 $$R_2=\overline R_1^{tt} \text{ modulo } \mathbb{C} Id, \ \ \text{ i.e., } \ R_2-\overline R_1^{tt}\in\mathbb{C} Id.$$
  By~\eqref{tri0}, it is equivalent to the  system of equations:
 \begin{equation}b=-R_{2,12}=\overline R_{2,21}=-\overline R_{1,21},\label{b=}\end{equation}
$$R_{2,11}-\overline R_{1,22}=R_{2,22}-\overline R_{1,11}.$$
  Setting 
 $$\phi:=R_{1,11}, \ \ \psi:=R_{2,22}$$
 we rewrite the latter equation as $R_{2,11}-\overline R_{1,22}=\psi-\bar\phi$, i.e., 
 \begin{equation}R_{1,22}=\phi+u, \ R_{2,11}=\psi+\bar u, \ u\in\mathbb{C}.\label{r21}\end{equation} 
The diagonal part of the second equation in~\eqref{symrel} states that the sum of the diagonal parts of the four matrices in question 
should be equal to a scalar matrix, i.e., should have equal diagonal terms. Taking into account~\eqref{r21} and that $K_{22}=0$,  $K_{11}=\nu$, this is equivalent to the equation   
$$\nu+\phi+\psi+\bar u=\bar\nu+\phi+\psi+u,$$
or equivalently, $\nu+\bar u$ is a real number. Therefore, 
$$u=-\bar\nu-c, \ c\in\mathbb{R}, \ R_{1,22}=\phi+u=\phi-\bar\nu-c, \ R_{2,11}=\psi-\nu-c.$$
Thus, in our case, being of torus dynamical type is equivalent to~\eqref{kdiag}.
\end{proof}

In what follows we deal with {\it normalized} systems~\eqref{kdiag}, in which 
\begin{equation}  b\in\mathbb{R}_{\geq0}, \ \ \a\in\mathbb{R}_+.\label{norms}\end{equation}
One can achieve conditions~\eqref{norms} by applying rotation by angle $\pi$ in the $z$-coordinate  and subsequent gauge transformation $(Y_1,Y_2)\mapsto(\varepsilon Y_1,Y_2)$, $|\varepsilon|=1$, rotating the coefficient $b$ to the real nonnegative semiaxis. This yields family of normalized systems 
\begin{equation} Y'=\left(\frac{\left(\begin{matrix}\nu & 0\\ 0 & 0\end{matrix}\right)}z+
\frac{\left(\begin{matrix}\phi & b\\ -b & \phi-\bar\nu-c\end{matrix}\right)}{z-\alpha}  + \frac{\left(\begin{matrix}\psi-\nu-c & 
-b\\  b & \psi\end{matrix}\right)}{z-\alpha^{-1}}\right)Y, \ \ \a\in\mathbb{R}_+, \   b\in\mathbb{R}_{\geq0}, \   c\in\mathbb{R}.  \label{kd}\end{equation}

\def\bam{\a^{-1}}

\begin{theorem} \label{heuntor} 1) For a system~\eqref{kd} with $\alpha\neq1$, $b\neq0$ the two following statements are equivalent:

(i) the system is equivalent to a Heun equation with singularities $0$, $\alpha$, $\a^{-1}$, $\infty$, i.e., 
for each solution $Y=(Y_1,  Y_2)$ of~\eqref{kd} its component $E=Y_2(z)$  
satisfies a Heun equation;

(ii)  One has  
\begin{equation}b^2=\phi(\bar\nu+c-\phi)=\psi(\nu+c-\psi)\in\mathbb{R}_{\geq0}; \ \ \text{ either } \ \psi=\bar\phi, \ \ \text{ or } \ \psi=\nu+c-\bar\phi.\label{psi}\end{equation}

2) If~\eqref{psi} holds, then 
 the corresponding Heun equation is 
\begin{equation} z(z-\bam)(z-\a)E''+(-\nu(z-\alpha)(z-\bam)+q z(z-\bam)+sz(z-\a))E'+(uz+d)E=0,
\label{heunt}\end{equation}
$$q=\bar\nu+c+1-2\phi, \ \ s=\nu+c+1-2\psi, \ u=(\bar\nu+c-\phi-\psi)(c+1-\phi-\psi), \ \ d=\nu((c+\bar\nu-\phi)\bam-\psi\a);$$
$$s=\begin{cases} \bar q, \ \text{ if } \ \psi=\bar\phi\\
s=2-\bar q, \ \text{ if } \psi=\nu+c-\bar\phi.\end{cases}$$
Thus, family of Heun equations~\eqref{heunt} depends on five real parameters: namely, 

-  a complex parameters  $\nu$;

- two real parameters $\alpha$ and $c$; 

- one complex parameter $\phi$ such that $\phi(\bar\nu+c-\phi)\in\mathbb{R}_+$;

- a double  choice for the parameter $\psi\in\{\bar\phi, \ \nu+c-\bar\phi\}$.
\end{theorem}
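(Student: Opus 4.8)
The plan is to collapse the first-order system (\ref{kd}) into a single scalar second-order equation on $E=Y_2$ and then determine exactly when that equation is of Heun type. Writing (\ref{kd}) componentwise as $Y_1'=a_{11}Y_1+a_{12}Y_2$ and $Y_2'=a_{21}Y_1+a_{22}Y_2$, the two off-diagonal entries combine into $a_{12}=\tfrac{b(\alpha-\alpha^{-1})}{(z-\alpha)(z-\alpha^{-1})}$ and $a_{21}=\tfrac{b(\alpha^{-1}-\alpha)}{(z-\alpha)(z-\alpha^{-1})}$. The decisive structural feature, valid because $b>0$ and $\alpha\neq1$, is that $a_{21}$ equals a nonzero constant divided by $(z-\alpha)(z-\alpha^{-1})$: it has no zeros in the finite plane, so expressing $Y_1=(Y_2'-a_{22}Y_2)/a_{21}$ and substituting into the first equation introduces no apparent singularity, while $a_{21}'/a_{21}=-\tfrac1{z-\alpha}-\tfrac1{z-\alpha^{-1}}$ contributes only simple poles at $\alpha,\alpha^{-1}$. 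First I would perform this elimination, obtaining $E''-PE'-QE=0$ with $P=a_{21}'/a_{21}+a_{11}+a_{22}$ and $Q=-(a_{21}'/a_{21}+a_{11})a_{22}+a_{21}a_{12}+a_{22}'$.

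I would then read off the pole structure. The coefficient $P$ has only simple poles, at $0,\alpha,\alpha^{-1}$, with residues $\nu$, $2\phi-\bar\nu-c-1$, $2\psi-\nu-c-1$; since the $E'$-coefficient of a Heun equation with these singularities has exactly this shape, $-P$ matches $\tfrac{p}{z}+\tfrac{q}{z-\alpha}+\tfrac{s}{z-\alpha^{-1}}$ and yields $p=-\nu$, $q=\bar\nu+c+1-2\phi$, $s=\nu+c+1-2\psi$, with no constraint. The entire obstruction to (\ref{heunt}) therefore lies in $Q$: the equation is Heun precisely when $Q\cdot z(z-\alpha)(z-\alpha^{-1})$ is a polynomial of degree at most one; since $Q$ automatically decays like $z^{-2}$ at infinity, this amounts to $Q$ having only simple poles at $0,\alpha,\alpha^{-1}$. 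Now $a_{21}a_{12}$, the product $(a_{21}'/a_{21}+a_{11})a_{22}$, and $a_{22}'$ each carry double poles at $\alpha$ and $\alpha^{-1}$, and the crux of the argument is to compute their combined residues there. I expect the coefficient of $(z-\alpha)^{-2}$ in $Q$ to be $-\phi(\phi-\bar\nu-c)-b^2$ and that of $(z-\alpha^{-1})^{-2}$ to be $-\psi(\psi-\nu-c)-b^2$, so that cancellation of both double poles is exactly the pair of conditions $b^2=\phi(\bar\nu+c-\phi)$ and $b^2=\psi(\nu+c-\psi)$. This single computation is the main obstacle: it is the only real calculation, and the identities in (\ref{psi}) are precisely what is needed to kill the spurious double poles.

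With the cancellations in hand, the reality hypothesis $b^2\in\mathbb R_{\ge0}$ produces the dichotomy for $\psi$. Conjugating $b^2=\phi(\bar\nu+c-\phi)$ and using $b^2=\overline{b^2}$ gives $b^2=\bar\phi(\nu+c-\bar\phi)$, so $\bar\phi$ is a root of the quadratic $\psi^2-(\nu+c)\psi+b^2=0$ whose two roots are $\bar\phi$ and $\nu+c-\bar\phi$; hence $\psi\in\{\bar\phi,\ \nu+c-\bar\phi\}$. Substituting these into $s=\nu+c+1-2\psi$ and comparing with $\bar q=\nu+c+1-2\bar\phi$ yields $s=\bar q$ in the first case and $s=2-\bar q$ in the second, matching the stated case split. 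This proves the implication (ii)$\Rightarrow$(i) together with the formulas for $q$ and $s$.

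It remains to identify $u,d$ and to argue the equivalence. Once the double poles are gone, $u=-\lim_{z\to\infty}z^2Q$ and $d=-\operatorname{Res}_{z=0}Q$ (using $\alpha\cdot\alpha^{-1}=1$); the asymptotics of $Q$ computed above give $u=(\bar\nu+c-\phi-\psi)(c+1-\phi-\psi)$, and since the pole of $Q$ at $0$ has residue $-\nu\,a_{22}(0)$ with $a_{22}(0)=(\bar\nu+c-\phi)\alpha^{-1}-\psi\alpha$, one gets $d=\nu\big((c+\bar\nu-\phi)\alpha^{-1}-\psi\alpha\big)$, as in (\ref{heunt}). For the converse (i)$\Rightarrow$(ii), the two independent solutions $Y$ of (\ref{kd}) yield two independent functions $Y_2$, which therefore span the solution space of the derived monic equation $E''-PE'-QE=0$; any Heun equation satisfied by every such $Y_2$ must coincide with this equation, so the system is equivalent to a Heun equation if and only if the derived equation is already of Heun form, i.e. if and only if the double-pole coefficients vanish, i.e. if and only if (\ref{psi}) holds. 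The only situations requiring separate care are the degenerate case $b=0$, where (\ref{kd}) is diagonal and the reduction to a genuine second-order equation breaks down, and the excluded value $\alpha=1$; both lie outside the generic four-singularity setting.
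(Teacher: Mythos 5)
Your proposal is correct and follows essentially the same route as the paper: eliminate $Y_1$ via the second equation of (\ref{kd}), differentiate and substitute into the first, and observe that the resulting second-order equation is Heun exactly when the spurious extra poles at $\alpha$, $\alpha^{-1}$ cancel, which is precisely (\ref{psi}); your claimed double-pole coefficients $\phi(\bar\nu+c-\phi)-b^2$ and $\psi(\nu+c-\psi)-b^2$, the residue computation for $p,q,s$, and the asymptotic/residue identification of $u,d$ all check out against the paper's direct expansion. The only cosmetic difference is that you work with the monic form $E''-PE'-QE=0$ and kill double poles of $Q$, whereas the paper multiplies through by $(z-\alpha)(z-\alpha^{-1})$ and kills the residual simple poles of the $E$-coefficient.
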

\begin{remark} In the crossed-out case $b=0$ the system is diagonal and the corresponding differential equation on $E$ is a Fuchsian equation with singularities 0, $\alpha$, $\alpha^{-1}$, $\infty$.
\end{remark}

\begin{proof}  The second equation, on $E=Y_2$ in~\eqref{kd} is 
$$E'=\frac{b(\bam-\a)}{(z-\a)(z-\bam)}Y_1+
\left(\frac{\phi-\bar\nu-c}{z-\a}+\frac\psi{z-\bam}\right)E,$$
which is equivalent to 
\begin{equation}Y_1=\frac1{ b(\bam-\a)}\left((z-\bam)(z-\a)E'-((\phi-\bar\nu-c)(z-\bam)+\psi(z-\a))E\right). 
\label{y1}\end{equation}
Differentiating~\eqref{y1} and multiplying it by $ b(\bam-\a)$ yields
$$ b(\bam-\a)Y_1'=(z-\bam)(z-\a)E''$$
$$+((z-\a)+(z-\bam)-(\phi-\bar\nu-c)(z-\bam)-\psi(z-\a))E'-(\phi+\psi-\bar\nu-c)E$$
\begin{equation}=(z-\bam)(z-\a)E''-((\phi-\bar\nu-c-1)(z-\bam)+(\psi-1)(z-\a))E'-(\phi+\psi-\bar\nu-c)E. \label{y11}\end{equation}
Equating the latter right-hand side to 
the first equation, on $Y_1'$, in~\eqref{kd}, multiplied by $ b(\bam-\a)$, yields 
that it is equal to 
$$ b(\bam-\a)\left(\left(\frac{\nu}z+\frac{\phi}{z-\a}+\frac{\psi-\nu-c}{z-\bam}\right)Y_1+
b\left(\frac1{z-\a}-\frac1{z-\bam}\right)E\right).$$
Substituting here expression~\eqref{y1} for $Y_1$ yields 
$$\left(\frac{\nu}z+\frac{\phi}{z-\a}+\frac{\psi-\nu-c}{z-\bam}\right)\left((z-\bam)(z-\a)E'-((\phi-\bar\nu-c)(z-\bam)+\psi(z-\a))E\right)$$
$$+ b^2(\bam-\a)\left(\frac1{z-\a}-\frac1{z-\bam}\right)E=\left(\frac{\nu}z+(\phi+\nu)(z-\bam)+
(\psi-\nu-c)(z-\a)-\nu\a\right)E'$$
$$-\left(\frac{\nu}z+\frac{\phi}{z-\a}+\frac{\psi-\nu-c}{z-\bam}\right)((\phi-\bar\nu-c)(z-\bam)+\psi(z-\a))E$$
$$+ b^2(\bam-\a)\left(\frac1{z-\a}-\frac1{z-\bam}\right)E$$
$$=\left(\frac{\nu}z+(\phi+\nu)(z-\bam)+
(\psi-\nu-c)(z-\a)-\nu\a\right)E'
+\left(\frac{\nu}z((\phi-\bar\nu-c)\bam+\psi\a)\right.$$
\begin{equation}\left.-(\phi+\psi-c)(\phi+\psi-c-\bar\nu)+(\bam-\a)\left(\frac{ b^2-\phi(\bar\nu+c-\phi)}
{z-\a}-\frac{ b^2-\psi(\nu+c-\psi)}{z-\bam}\right)\right)E.\label{z-a}\end{equation}
Equating~\eqref{z-a} to~\eqref{y11} yields a linear second order differential equation in $E$. It is a Heun equation, 
if and only if the latter coefficients at $\frac1{z-\a}$, $\frac1{z-\bam}$ vanish. This is equivalent to system of equalities 
~\eqref{psi}. This proves Statement 1) of Theorem \ref{heuntor}.

In the latter case the Heun equation takes a form 
$$z(z-\a)(z-\bam)E''+(\text{degree 2 polynomial})\times E'+(uz+d)E=0.$$
The polynomial coefficient at $E'$ is found from~\eqref{y11} and~\eqref{z-a}: it is equal to 
$$(\bar\nu+c+1-2\phi-\nu)z(z-\bam)+(1+\nu+c-2\psi)z(z-\a)+\nu\alpha(z-\bam).$$
Writing it as $p(z-\bam)(z-\a)+qz(z-\bam)+sz(z-\a)$ with unknown coefficients $p$, $q$, $s$, we find them 
by substituting $z=0,\ \a, \ \bam$: this yields
$$p=-\nu, \ \ q=\bar\nu+c+1-2\phi-\nu+\nu=\bar\nu+c+1-2\phi, \ \ s=\nu+c+1-2\psi.$$
The polynomial coefficient $uz+d$ at $E$ is also found from~\eqref{y11} and~\eqref{z-a}:
$$u=(\bar\nu+c-\phi-\psi)(c+1-\phi-\psi), \ \ d=\nu((c+\bar\nu-\phi)\bam-\psi\a).$$
Theorem \ref{heuntor} is proved.
\end{proof}

\section{Confluent case}\label{sec:confluent-case}

Here we describe torus dynamical type linear systems with three singularities: Fuchsian singularities at 
$0$, $\infty$ and Poincar\'e rank 1 irregular singularity at $1$, 
with residue matrix at $0$ being diagonal $\diag(\nu,0)$. This family is a limit of family~\eqref{kdiag}, as $\a\to1$. We prove the next two theorems.

\begin{theorem} \label{tconfs} Each torus dynamical type linear system of the above type  can be 
normalized by gauge transformation $(Y_1,Y_2)\mapsto(\varepsilon Y_1,Y_2)$, $|\varepsilon|=1$, to a system of 
the form 
\begin{equation} Y'=\left(\frac{\left(\begin{matrix}\nu & 0\\ 0 & 0\end{matrix}\right)}z+
\frac{\left(\begin{matrix}a_2 & b\\ - b & a_2-g\end{matrix}\right)}{(z-1)^2}  + \frac{\left(\begin{matrix} 
a_1 & 0\\ 0 & a_1+\nu-\bar\nu\end{matrix}\right)}{z-1}\right)Y, \ \ \ b\in\mathbb{R}, \ \ g\in\mathbb{R},\label{y'c}\end{equation}
where the other parameters $\nu$, $a_1$, $a_2$ are complex.
\end{theorem}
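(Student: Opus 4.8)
The plan is to mirror the proof of Proposition \ref{psym} and of the proposition producing form (\ref{kdiag}), adapted to the confluent configuration of singularities $0$, $1$, $\infty$. First I would record the general shape of a system of the stated type: a rational connection with a simple pole at $0$, a double (Poincar\'e rank $1$) pole at $1$, and Fuchsian behaviour at $\infty$ is necessarily
\[
Y'=\left(\frac{K}{z}+\frac{A_2}{(z-1)^2}+\frac{A_1}{z-1}\right)Y,\qquad K=\diag(\nu,0),
\]
because vanishing of the polynomial part is exactly the Fuchsian condition at infinity (one checks $zM(z)\to K+A_1$) and $\diag(\nu,0)$ is the prescribed residue at $0$.

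Next I would compute the image of this system under the involution $J$ of (\ref{barsym}), exactly as in the derivation of (\ref{syst3}), but tracking the double pole. Applying $(Y_1,Y_2,z)\mapsto(Y_2,Y_1,z)$ replaces $K,A_2,A_1$ by $K^{tt},A_2^{tt},A_1^{tt}$; conjugating and setting $w=\bar z$ replaces these by their complex conjugates (the singularity $1$ is real); and the substitution $\tilde z=w^{-1}$, with $dz=-\tilde z^{-2}\,d\tilde z$, transforms the three terms. The only subtlety, which I expect to be the main technical point, is the behaviour of the irregular part: since $\frac{1}{(z-1)^2}\,dz$ pulls back to $-\frac{1}{(\tilde z-1)^2}\,d\tilde z$, the double-pole term acquires a sign, whereas the simple poles transform as in the Fuchsian case. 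The resulting $J$-image is
\[
\frac{d\tilde Y}{d\tilde z}=\left(-\frac{\overline{K^{tt}}+\overline{A_1^{tt}}}{\tilde z}-\frac{\overline{A_2^{tt}}}{(\tilde z-1)^2}+\frac{\overline{A_1^{tt}}}{\tilde z-1}\right)\tilde Y.
\]

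By Remark \ref{remsym}, torus dynamical type is equivalent to this image coinciding with the original system modulo a scalar matrix function; since that scalar correction is rational with poles only at the singularities, I would impose the coincidence principal-part by principal-part. This gives three relations modulo $\mathbb{C}\,\idd$: at $z=0$, $K+\overline{K^{tt}}+\overline{A_1^{tt}}\in\mathbb{C}\,\idd$; at the simple pole $z=1$, $A_1-\overline{A_1^{tt}}\in\mathbb{C}\,\idd$; and at the double pole, $A_2+\overline{A_2^{tt}}\in\mathbb{C}\,\idd$. Because $K+\overline{K^{tt}}=\diag(\nu,\bar\nu)$ is diagonal, the first relation forces the off-diagonal entries of $A_1$ to vanish and yields $A_1=\diag(a_1,\,a_1+\nu-\bar\nu)$; a short check shows this choice automatically satisfies the second relation. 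The double-pole relation forces the off-diagonal entries of $A_2$ to have the form $b,-\bar b$ and the imaginary parts of its diagonal entries to coincide, i.e. $A_2=\begin{pmatrix}a_2 & b\\ -\bar b & a_2-g\end{pmatrix}$ with $g\in\mathbb R$.

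Finally I would normalize. Conjugation by $H=\diag(\var,1)$, i.e. the gauge $(Y_1,Y_2)\mapsto(\var Y_1,Y_2)$ with $|\var|=1$, fixes the diagonal matrices $K$ and $A_1$, fixes the diagonal of $A_2$, and multiplies $b$ by $\var$; choosing $\var$ appropriately rotates $b$ onto the real axis, after which $-\bar b=-b$ and the system takes exactly the form (\ref{y'c}) with $b,g\in\mathbb R$. The standing hypothesis $b\neq0$ is the genericity assumption that makes the rotation and the whole confluent Heun reduction meaningful; this completes the normalization.
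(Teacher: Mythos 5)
Your proposal is correct and follows essentially the same route as the paper: the paper first proves Proposition \ref{pcsym} by computing the $J$-image of the confluent system (yielding exactly your three relations $K+\overline{K^{tt}}+\overline{A_1^{tt}}\in\mathbb{C}\idd$, $A_2+\overline{A_2^{tt}}\in\mathbb{C}\idd$, $A_1-\overline{A_1^{tt}}\in\mathbb{C}\idd$, the last being automatic), and then solves them for $K=\diag(\nu,0)$ and rotates $b$ onto the real axis by the gauge $\diag(\var,1)$. Your handling of the double-pole term under $\tilde z=\bar z^{-1}$ and the resulting sign matches the paper's computation, so there is nothing to correct.
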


Afterwards we discuss relation to confluent Heun equations. Recall that the 
standard family of confluent Heun equation is family of equations 
\begin{equation}\widehat E''_{ww}+\left(\frac\gamma{w}+\frac\delta{w-1}+\varepsilon\right)\widehat E'_w+\frac{\a w-q}{w(w-1)}\widehat E=0\label{heuncs}\end{equation}
which have Fuchsian singularities at $0$, $1$ and irregular singularity at $\infty$ of Poincar\'e rank 1.  We 
will deal with the following family of so-called {\it renormalized} 
 confluent Heun equations written in equivalent form, with 
Fuchsian singularities at $0$, $\infty$ and irregular singularity at $1$:
\begin{equation}z(z-1)^2E''+(pz(z-1)+qz+s)E'+(uz+d)E=0.\label{heunc}\end{equation}
Equations~\eqref{heunc} are obtained from equations~\eqref{heuncs} with $\varepsilon\neq0$ by changes $z=\frac w{w-1}$, $E=(z-1)^\lambda\widehat{E}$, $\lambda=-\frac{\alpha}{\varepsilon}$,  of variable and function. We prove the following theorem describing those systems (\ref{y'c}) that are equivalent to equations~\eqref{heunc} on the second components $E=Y_2(z)$ of their solutions $Y=(Y_1,Y_2)$. We describe equations (\ref{heunc}) that arise in this way.
\begin{theorem} \label{thc} 1) System~\eqref{y'c} with $b\neq0$ is equivalent to confluent Heun equation~\eqref{heunc} on 
 second component $E=Y_2(z)$ of its solution, if and only if its parameters satisfy the following system of 
 two equations:
 \begin{equation} b^2=a_2(g-a_2), \ \ \ a_2(\nu-\bar\nu)+a_1(2a_2-g)=0.\label{ech}\end{equation}
 
 2) In this case the corresponding coefficients of Heun equation~\eqref{heunc} are 
$$p=\bar\nu+2-2\nu-2a_1, \ \ q=\nu+g-2a_2, \ \ s=-\nu,$$
 \begin{equation} u=(a_1+\nu-1)(a_1+\nu-\bar\nu), \ \ d=\nu(a_2-g-a_1-\nu+\bar\nu).\label{coefl}
 \end{equation}
 \end{theorem}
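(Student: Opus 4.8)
The plan is to mirror, step for step, the elimination carried out in the proof of Theorem \ref{heuntor}, now in the confluent geometry where the two simple poles at $\a$, $\a^{-1}$ have merged into a single order-two pole at $z=1$ (system (\ref{y'c}) being the limit of (\ref{kdiag}) as $\a\to1$). First I would write out the two scalar equations contained in (\ref{y'c}). The equation on $E=Y_2$ reads
\begin{equation*}
E'=-\frac{b}{(z-1)^2}Y_1+\left(\frac{a_2-g}{(z-1)^2}+\frac{a_1+\nu-\bar\nu}{z-1}\right)E,
\end{equation*}
and since $b\neq0$ it can be solved algebraically for $Y_1$, giving
\begin{equation*}
Y_1=-\frac{(z-1)^2}{b}E'+\frac1b\bigl((a_2-g)+(a_1+\nu-\bar\nu)(z-1)\bigr)E.
\end{equation*}
The essential structural point, already visible here, is that the leading term of $Y_1$ is $-\tfrac1b(z-1)^2E'$: the factor $(z-1)^2$ exactly cancels the double pole, so no spurious singularity at $z=1$ will appear in the coefficient of $E'$ later on.

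Next I would differentiate this expression to obtain $Y_1'$ in terms of $E$, $E'$, $E''$, and substitute both $Y_1$ and $Y_1'$ into the first scalar equation of (\ref{y'c}) (the equation on $Y_1'$). This produces a second-order linear ODE in $E$ alone whose $E''$-coefficient is $-(z-1)^2$ and whose $E'$-coefficient has only a simple pole at $z=0$ (from the $\tfrac\nu z$ term), with no pole at $z=1$. All of the $z=1$ singularities land in the coefficient of $E$, and they occur at exactly two orders: the $(z-1)^{-2}$-part and the $(z-1)^{-1}$-part. Demanding that the $(z-1)^{-2}$-part vanish gives $b^2=a_2(g-a_2)$, which is the statement that the residue matrix at the irregular singularity has zero determinant — precisely the reducibility condition one expects, and the exact analogue of $b^2=\phi(\bar\nu+c-\phi)$ in Theorem \ref{heuntor}. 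Demanding that the $(z-1)^{-1}$-part vanish gives $a_2(\nu-\bar\nu)+a_1(2a_2-g)=0$. These are the two equations (\ref{ech}), proving Statement 1). Once they hold, the ODE acquires only the remaining simple pole at $z=0$; multiplying through by $z$ and normalizing the overall sign turns the $E''$-coefficient into $z(z-1)^2$, i.e. the renormalized confluent Heun form (\ref{heunc}). The coefficients $p,q,s,u,d$ are then read off by evaluating the resulting polynomial coefficients at $z=0$ and $z=1$ and comparing degree-by-degree, yielding (\ref{coefl}); as a consistency check, $z=0$ in the $E'$-coefficient returns $s=-\nu$ and $z=1$ localizes $q$ and $d$.

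The part requiring the most care — and the genuine difference from the non-confluent proof — is the bookkeeping at the double pole. In Theorem \ref{heuntor} the two vanishing conditions came from two distinct simple poles and were obtained independently as two residues; here both conditions are localized at the single point $z=1$ but sit at two different pole orders, so the substitution must be organized to separate the $(z-1)^{-2}$ and $(z-1)^{-1}$ coefficients cleanly. I would emphasize that the two conditions are independent: the first involves $b$ and the second does not, so neither presupposes the other. I would also explicitly verify the claim made above that the coefficient of $E'$ develops no pole at $z=1$ (this is what makes the confluent case reduce to a scalar equation of the expected form and is the reason only two conditions, rather than three, appear). Beyond this careful separation of orders, the argument is the same routine elimination as in the Fuchsian case, and I expect no conceptual obstacle.
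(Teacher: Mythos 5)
Your proposal is correct and follows essentially the same route as the paper's proof: solve the second scalar equation of (\ref{y'c}) for $Y_1$, differentiate, substitute into the first equation, observe that the $E'$-coefficient is regular at $z=1$, and impose vanishing of the $(z-1)^{-2}$ and $(z-1)^{-1}$ parts of the $E$-coefficient, which yield exactly the two equations (\ref{ech}), after which the coefficients (\ref{coefl}) are read off. The only differences are presentational (the paper reads $p,q,s,u,d$ directly from the reduced equation rather than by evaluating at $z=0$ and $z=1$), not mathematical.
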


\subsection{Torus dynamical type confluent systems. Proof of Theorem \ref{tconfs}}

\begin{proposition} \label{pcsym} A linear system 
\begin{equation}Y'=\left(\frac Az+\frac B{(z-1)^2}+\frac C{z-1}\right)Y\label{yconfl}\end{equation}
is of torus dynamical type, if and only if the following equalities hold modulo $\mathbb{C} Id$:
\begin{equation}A=-(\overline A^{tt}+\overline C^{tt}), \ \ B=-\overline B^{tt}, \ C=\overline C^{tt}.\label{abc}\end{equation}
\end{proposition}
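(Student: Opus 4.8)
The plan is to mirror the proof of Proposition \ref{psym}, invoking the symmetry criterion of Remark \ref{remsym}: system (\ref{yconfl}) has torus dynamical type if and only if the involution $J:(Y_1,Y_2,z)\mapsto(\overline Y_2,\overline Y_1,\bar z^{-1})$ carries it to itself up to addition of a scalar matrix function. So I would compute the image of (\ref{yconfl}) under $J$ explicitly, re-expand it into partial fractions with poles at the same points $0$ and $1$, and then compare the resulting coefficient matrices with $A$, $B$, $C$ modulo $\mathbb{C} Id$.

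I would carry out $J$ in the same stages as before. First, the permutation $(Y_1,Y_2)\mapsto(Y_2,Y_1)$ conjugates each coefficient matrix by $P$, replacing $A$, $B$, $C$ by $A^{tt}$, $B^{tt}$, $C^{tt}$. Second, passing to complex conjugates of the vector values and of $z$, with $w:=\bar z$ and $\wt Y_j:=\overline Y_{2-j}$, conjugates the matrices; since all three singularities $0$, $1$, $\infty$ are real, this replaces the coefficient matrices by $\overline A^{tt}$, $\overline B^{tt}$, $\overline C^{tt}$, giving
$$\frac{d\wt Y}{dw}=\left(\frac{\overline A^{tt}}w+\frac{\overline B^{tt}}{(w-1)^2}+\frac{\overline C^{tt}}{w-1}\right)\wt Y.$$
Third, I would substitute $w=\wt z^{-1}$ (so the total effect on the variable is $z\mapsto\bar z^{-1}$, as required by $J$) and use $\frac{d\wt Y}{d\wt z}=-\wt z^{-2}\frac{d\wt Y}{dw}$.

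The one genuinely new point, and the place where care is needed, is the behaviour of the \emph{double} pole at $z=1$ under the inversion $w=\wt z^{-1}$: in the four-singularity case of Proposition \ref{psym} all poles were simple. Here $w-1=\frac{1-\wt z}{\wt z}$, so that $\frac1{(w-1)^2}=\frac{\wt z^2}{(\wt z-1)^2}$ and $\frac1{w-1}=\frac{\wt z}{1-\wt z}$; after multiplying by $-\wt z^{-2}$ the double-pole term returns cleanly as $-\frac{\overline B^{tt}}{(\wt z-1)^2}$, but the simple-pole term becomes $\frac{\overline C^{tt}}{\wt z(\wt z-1)}$, and the partial-fraction identity $\frac1{\wt z(\wt z-1)}=\frac1{\wt z-1}-\frac1{\wt z}$ redistributes a copy of $\overline C^{tt}$ back into the residue at the origin. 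Collecting everything, the image system is
$$\frac{d\wt Y}{d\wt z}=\left(\frac{-\overline A^{tt}-\overline C^{tt}}{\wt z}-\frac{\overline B^{tt}}{(\wt z-1)^2}+\frac{\overline C^{tt}}{\wt z-1}\right)\wt Y.$$

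Finally, since the three rational functions $\frac1z$, $\frac1{(z-1)^2}$, $\frac1{z-1}$ are linearly independent and both systems have poles only at $0$ and $1$, requiring the image to agree with the original matrix modulo a scalar matrix function forces the matching to hold coefficient by coefficient modulo $\mathbb{C} Id$; reading off the three residues gives exactly $A=-(\overline A^{tt}+\overline C^{tt})$, $B=-\overline B^{tt}$, $C=\overline C^{tt}$, which is the system (\ref{abc}). The main obstacle is purely bookkeeping: tracking the extra $-\overline C^{tt}/\wt z$ term produced by inverting the simple pole at $1$, which feeds into the residue at $0$; once that term is correctly accounted for, the three equalities follow immediately.
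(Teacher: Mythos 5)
Your proposal is correct and follows essentially the same route as the paper, whose own proof is just the one-line remark that the argument is analogous to that of Proposition \ref{psym}; you have filled in exactly the computation that remark alludes to. In particular, your careful tracking of how the inversion turns the simple pole $\frac{\overline C^{tt}}{w-1}$ into $\frac{\overline C^{tt}}{\wt z-1}-\frac{\overline C^{tt}}{\wt z}$ is precisely the step that produces the combination $A=-(\overline A^{tt}+\overline C^{tt})$ in (\ref{abc}), so nothing is missing.
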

\begin{proof} The proof is analogous to that of Proposition \ref{psym}. The variable changes 
 $Y\mapsto (\overline Y_2,\overline Y_1)$, $z\mapsto\bar z^{-1}$ replace the matrices $A$, $B$, $C$ of system 
~\eqref{yconfl} by $-(\overline A^{tt}+\overline C^{tt})$, $-\overline B^{tt}$, $\overline C^{tt}$ respectively, 
 as in the proof of Proposition \ref{psym}. This together with Remark \ref{remsym} implies the statement of Proposition \ref{pcsym}.
 \end{proof}
 
Consider a system~\eqref{yconfl} with $A=\diag(\nu,0)$. It is  of torus dynamical type, if and only if equalities~\eqref{abc} hold. The first equality in~\eqref{abc} is equivalent to the condition saying that $C$ is a diagonal matrix and the difference of its second and first diagonal elements is equal to $\nu-\bar\nu$. Then it automatically satisfies the third equality in~\eqref{abc} modulo $\mathbb{C} Id$. The second equality in 
~\eqref{abc} is equivalent to the condition that the matrix $B$ is as in~\eqref{y'c} after applying appropriate gauge transformation $(Y_1,Y_2)\mapsto(\varepsilon Y_1, Y_2)$, $|\varepsilon|=1$. Theorem \ref{tconfs} is proved.

\subsection{Corresponding confluent Heun equations. Proof of Theorem \ref{thc}}

The second differential equation, on $E=Y_2$, in~\eqref{y'c} is  
$$E'=-\frac{b}{(z-1)^2}Y_1+\left(\frac{a_2-g}{(z-1)^2}+\frac{a_1+\nu-\bar\nu}{z-1}\right)E,$$
which is equivalent to 
\begin{equation} Y_1=- b^{-1}\left((z-1)^2E'-(a_2-g+(z-1)(a_1+\nu-\bar\nu))E\right).\label{y1c}\end{equation}
Differentiating~\eqref{y1c} and equating it with the right-hand side of the first equation, on $Y_1$, in~\eqref{y'c}
yields
$$Y'_1= -b^{-1}\left((z-1)^2E''-(a_2-g+(z-1)(a_1+\nu-\bar\nu-2))E'-(a_1+\nu-\bar\nu)E\right)$$
$$=\left(\frac\nu{z}+\frac{a_2}{(z-1)^2}+\frac{a_1}{z-1}\right)Y_1+
\frac{b}{(z-1)^2}E.$$
Substituting formula~\eqref{y1c} for $Y_1$ to the latter expression, together with formulas  
$$\left(\frac\nu{z}+\frac{a_2}{(z-1)^2}+\frac{a_1}{z-1}\right)(z-1)^2=\frac\nu{z}+(a_1+\nu)(z-1)+a_2-\nu,$$
\begin{multline*}
\left(\frac\nu{z}+\frac{a_2}{(z-1)^2}+\frac{a_1}{z-1}\right)(a_2-g+(z-1)(a_1+\nu-\bar\nu))=
\frac{a_2(a_2-g)}{(z-1)^2}+\\+\frac{a_2(a_1+\nu-\bar\nu)+a_1(a_2-g)}{z-1}+\frac{\nu}z(a_2-g-a_1-\nu+\bar\nu)+(a_1+\nu)(a_1+\nu-\bar\nu),
\end{multline*}
which yields
$$(z-1)^2E''-\left((2a_1+2\nu-\bar\nu-2)(z-1)+2a_2-g-\nu+\frac\nu{z}\right)E'+\left(\frac{b^2-a_2(g-a_2)}{(z-1)^2}
\right.$$
\begin{equation}\left.+\frac{a_2(a_1+\nu-\bar\nu)+a_1(a_2-g)}{z-1}+\frac\nu{z}(a_2-g-a_1-\nu+\bar\nu)+
(a_1+\nu-1)(a_1+\nu-\bar\nu)\right)E=0.\label{heuncd}\end{equation}
Equation~\eqref{heuncd} is equivalent to~\eqref{heunc}, if and only if its coefficient at $E$ does not have a pole at $z=1$. The latter condition is equivalent to the system of equations~\eqref{ech}. In this case~\eqref{heuncd} yields formulas~\eqref{coefl} for the coefficients of the Heun equation. Theorem \ref{thc} is proved.

 \section{Dynamical systems on torus}\label{sec:dyn-tor-from-lin-sys}
 
\subsection[Case of four singularities]{Case of four singularities $0$, $\a>0$, $\a^{-1}$, $\infty$; $\alpha\neq1$}\label{subsec:four-sing-proof}

 Linear system~\eqref{kd} on a vector function $Y=(Y_1(z),Y_2(z))$ together with the tautological projection 
 $$\mathbb{C}^2_{Y_1,Y_2}\setminus\{0\}\to\mathbb{CP}^1[Y_1:Y_2]=\overline{\mathbb{C}}_{\Phi}, \ \ \ \Phi:=\frac{Y_2}{Y_1},$$
 induce the following Riccati equation on the function $\Phi(z)$, the projectivization of~\eqref{kd}: 
 \begin{equation}\Phi'=-\frac{b(\difa)}{\zaal}(1+\Phi^2)-\left(\frac{\bar\nu+c}{z-\a}-\frac{\nu+c}{z-\bam}+
 \frac{\nu}z\right)\Phi, \ \ \a>0, \ b\geq0, \ c\in\mathbb{R}.\label{ric1}\end{equation}
 \begin{proposition} \label{prodyn1} The variable change 
 $$\Phi=e^{i\theta}, \ \ \ z=e^{i\tau}$$
 transforms the restriction of Riccati equation~\eqref{ric1} to the  torus $\mathbb{T}^2=\mathbb{R}^2_{\theta,\tau}\slash 2\pi\mathbb{Z}^2$ 
 to the following differential equation on torus:
 \begin{equation}\frac{d\theta}{d\tau}=\frac{\cos\theta+B+A\sin\tau}{\omega(1-\delta\cos\tau)}+D,
 \label{dyn}\end{equation}
$$\omega=\frac{\a+\bam}{2b(\difa)}, \ \ \  \ D=-\re\nu, \ \ \ \ \delta=\frac2{\a+\bam}\in(0,1),$$
 \begin{equation} 
 B=\frac{c+\re\nu}{2b}, \ \ \ \ \ \ \  A=\frac{\im\nu}{b(\difa)}.\label{coefd}\end{equation}
 \end{proposition}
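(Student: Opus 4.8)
The plan is to substitute $\Phi=e^{i\theta}$, $z=e^{i\tau}$ into (\ref{ric1}), convert the $z$-derivative of $\Phi$ into a $\tau$-derivative of $\theta$, isolate $\frac{d\theta}{d\tau}$, and then simplify the resulting right-hand side on the unit circles until it matches (\ref{dyn}). First I would differentiate the substitutions: from $\frac{dz}{d\tau}=iz$ and $\frac{d\Phi}{d\tau}=i\Phi\frac{d\theta}{d\tau}$ the chain rule gives $\Phi'=\frac{d\Phi/d\tau}{dz/d\tau}=\frac{\Phi}{z}\frac{d\theta}{d\tau}$. Plugging this into (\ref{ric1}) and multiplying through by $\frac{z}{\Phi}$ isolates $\frac{d\theta}{d\tau}$ as the sum of a term proportional to $\frac{z}{\zaal}\cdot\frac{1+\Phi^2}{\Phi}$ and the term $-\left(\frac{(\bar\nu+c)z}{z-\a}-\frac{(\nu+c)z}{z-\a^{-1}}+\nu\right)$, the constant $\nu$ arising from $-z\cdot\frac{\nu}{z}$.

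Two elementary identities, both valid only because we restrict to the circles, do most of the work. For the denominator I would use $(z-\a)(z-\a^{-1})=z^2-(\a+\a^{-1})z+1=z\,(2\cos\tau-\a-\a^{-1})$, since $z^2+1=2z\cos\tau$ when $z=e^{i\tau}$; this collapses $\zaal$ into $z$ times the real factor $2\cos\tau-\a-\a^{-1}$, cancelling the $z$ introduced by multiplication by $\frac{z}{\Phi}$. For the quadratic numerator I would use $\frac{1+\Phi^2}{\Phi}=e^{-i\theta}+e^{i\theta}=2\cos\theta$. Together these turn the first term into $\frac{2b(\difa)\cos\theta}{(\a+\a^{-1})-2\cos\tau}$, and since $\frac{1}{\omega(1-\delta\cos\tau)}=\frac{2b(\difa)}{(\a+\a^{-1})-2\cos\tau}$ for $\omega=\frac{\a+\a^{-1}}{2b(\difa)}$, $\delta=\frac{2}{\a+\a^{-1}}$, this is exactly $\frac{\cos\theta}{\omega(1-\delta\cos\tau)}$.

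For the remaining linear-fractional terms I would put $\frac{(\bar\nu+c)z}{z-\a}-\frac{(\nu+c)z}{z-\a^{-1}}$ over the common denominator, use the same reduction, and split into real and imaginary parts via $\nu=\re\nu+i\im\nu$ and $z=\cos\tau+i\sin\tau$. The clean point is that the imaginary part of the resulting numerator is proportional to exactly $(\a+\a^{-1})-2\cos\tau$, so it cancels the denominator and leaves the constant $-i\im\nu$; combined with the leftover $+\nu$ inside the parentheses this gives the purely real $\re\nu$, and the overall sign produces the constant $D=-\re\nu$. Collecting what remains over $(\a+\a^{-1})-2\cos\tau$ and comparing numerators with the target $\frac{2b(\difa)(\cos\theta+B+A\sin\tau)}{(\a+\a^{-1})-2\cos\tau}$ then forces $B=\frac{c+\re\nu}{2b}$ from the constant term and $A=\frac{\im\nu}{b(\difa)}$ from the $\sin\tau$ term, which are the formulas (\ref{coefd}). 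I do not expect a conceptual obstacle; the only delicate step is the bookkeeping of signs in the real/imaginary split, and the reality of the whole right-hand side is guaranteed a priori by the torus dynamical type of (\ref{kd}), which serves as a useful cross-check that the imaginary contributions must cancel.
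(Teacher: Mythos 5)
Your proposal is correct and follows essentially the same route as the paper's proof: the same chain-rule reduction $\frac{d\theta}{d\tau}=\frac{z}{\Phi}\Phi'$, the same unit-circle identities $(z-\a)(z-\a^{-1})=z\,(2\cos\tau-\a-\a^{-1})$ and $\Phi+\Phi^{-1}=2\cos\theta$, and the same real/imaginary split of $\nu$ producing the constant $-\re\nu$ and the $\sin\tau$ term. The only cosmetic difference is that you combine the two linear-fractional terms over a common denominator before splitting into real and imaginary parts, while the paper separates the $c+\re\nu$ and $i\im\nu$ coefficients first; the resulting identifications of $\omega$, $\delta$, $B$, $A$, $D$ coincide with (\ref{coefd}).
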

 \begin{proof} One has ${d\Phi}/{d\tau}=iz({d\Phi}/{dz})=i\Phi({d\theta}/{d\tau})$. Therefore,
\begin{equation}\frac{d\theta}{d\tau}=\frac{z}{\Phi}\frac{d\Phi}{dz}=-\frac{b(\difa)z}{\zaal}(\Phi+\Phi^{-1})-
 \left(\frac{(\bar\nu+c)z}{z-\a}-\frac{(\nu+c)z}{z-\bam}+
\nu\right).\label{exth}\end{equation}
The first ratio (taken with sign ``$-$'') in the  right-hand side in~\eqref{exth} is equal to 
$$\frac{b(\bam-\a)z}{\zaal}=\frac{b(\bam-\a)e^{i\tau}}{(e^{i\tau}-\a)(e^{i\tau}-\bam)}=
\frac{b(\difa)}{(\a+\bam)-e^{i\tau}-e^{-i\tau}}=\frac{b(\difa)}{\a+\bam-2\cos\tau}.$$
 The expression in big brackets in the same right-hand side is equal to 
 $$\frac{(c+\re\nu)(\difa)z}{\zaal}+i\im\nu\left(1-\frac z{z-\a}-\frac z{z-\bam}\right)+\re\nu$$
 $$=
 -\frac{(c+\re\nu)(\difa)}{\zaaa}+i\im\nu\frac{z(z^{-1}-z)}{\zaa}+\re\nu$$
 $$=-\frac{(c+\re\nu)(\difa)}{\zaaa}+i\im\nu\frac{2i\sin\tau}{\zaaa}+\re\nu.$$
 Substituting the above equalities and $\Phi^{-1}+\Phi=2\cos\theta$ (for $|\Phi|=1$)  to~\eqref{exth} yields
 $$\frac{d\theta}{d\tau}=\frac{(\difa)(2b\cos\theta+(c+\re\nu))+2\im\nu\sin\tau}{\a+\bam-2\cos\tau}-\re\nu.$$
 This proves~\eqref{dyn} with parameters given by~\eqref{coefd}.
 \end{proof}

\subsection[Confluent case: three singularities]{Confluent case: three singularities $0$, $1$, $\infty$}\label{subsec:three-sings}
Linear system~\eqref{y'c} on a vector function $Y=(Y_1(z),Y_2(z))$ together with the tautological projection 
 $$\mathbb{C}^2_{Y_1,Y_2}\setminus\{0\}\to\mathbb{CP}^1[Y_1:Y_2]=\overline{\mathbb{C}}_{\Phi}, \ \ \ \Phi:=\frac{Y_2}{Y_1},$$
 induce the following Riccati equation on the function $\Phi(z)$, the projectivization of~\eqref{y'c}: 
 \begin{equation}\Phi'=-\frac{b}{(z-1)^2}(1+\Phi^2)+\left(-\frac{\nu}{z}-\frac{g}{(z-1)^2}+\frac{\nu-\bar\nu}{z-1}\right)\Phi, \ \  \ b, g\in\mathbb R.\label{ric2}\end{equation}

\begin{proposition} \label{prodyn2} The variable change $\Phi=e^{i\theta}$, $z=e^{i\tau}$ 
 transforms the restriction of Riccati equation~\eqref{ric2} to the  torus $\mathbb{T}^2=\mathbb{R}^2_{\theta,\tau}\slash2\pi\mathbb{Z}^2$ 
 to the following differential equation on torus:
 \begin{equation}\frac{d\theta}{d\tau}=\frac{\cos\theta+B+A\sin\tau}{\omega(1-\cos\tau)}+D,
 \label{dyn2}\end{equation}
 \begin{equation} \omega=\frac1b, \ \ B=\frac{g}{2b}, \ \ A=\frac{\im\nu}{b}, \ \ D=-\re\nu.\label{confcs}
 \end{equation}
 \end{proposition}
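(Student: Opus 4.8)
The plan is to repeat, in the confluent setting, the computation carried out for Proposition \ref{prodyn1}. First I would invoke the chain rule: from $z=e^{i\tau}$ one gets $\frac{dz}{d\tau}=iz$, and from $\Phi=e^{i\theta}$ one gets $\frac{d\Phi}{d\tau}=i\Phi\frac{d\theta}{d\tau}$; equating these with $\frac{d\Phi}{d\tau}=iz\Phi'$ gives $\frac{d\theta}{d\tau}=\frac{z}{\Phi}\Phi'$. I then substitute the right-hand side of (\ref{ric2}) and use that on the invariant torus $|\Phi|=1$, so that $\Phi+\Phi^{-1}=2\cos\theta$. The nonlinear term $-\frac{b}{(z-1)^2}(1+\Phi^2)$ then contributes $-\frac{2bz\cos\theta}{(z-1)^2}$, while the linear term contributes $-\nu-\frac{gz}{(z-1)^2}+\frac{(\nu-\bar\nu)z}{z-1}$.

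The core of the computation is the evaluation of these rational functions of $z$ on the unit circle $z=e^{i\tau}$. The decisive identity, which is the confluent limit ($\a\to1$) of the one used in the proof of Proposition \ref{prodyn1}, is
$$\frac{z}{(z-1)^2}=\frac{e^{i\tau}}{(e^{i\tau}-1)^2}=\frac1{e^{i\tau}-2+e^{-i\tau}}=-\frac1{2(1-\cos\tau)}.$$
This turns the two double-pole contributions into $\frac{b\cos\theta}{1-\cos\tau}$ and $\frac{g}{2(1-\cos\tau)}$ respectively. For the simple-pole term I would use
$$\frac{z}{z-1}=\frac{1-e^{i\tau}}{2(1-\cos\tau)}=\frac12-\frac{i\sin\tau}{2(1-\cos\tau)},$$
together with $\nu-\bar\nu=2i\im\nu$, to obtain
$$\frac{(\nu-\bar\nu)z}{z-1}=i\im\nu+\frac{\im\nu\,\sin\tau}{1-\cos\tau}.$$

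Assembling these pieces, the purely imaginary contribution $i\im\nu$ exactly cancels the imaginary part of $-\nu=-\re\nu-i\im\nu$, so the right-hand side is real, as required. Collecting the surviving terms gives
$$\frac{d\theta}{d\tau}=\frac{b\cos\theta+\tfrac g2+\im\nu\,\sin\tau}{1-\cos\tau}-\re\nu=\frac{\cos\theta+\tfrac g{2b}+\tfrac{\im\nu}b\sin\tau}{\tfrac1b(1-\cos\tau)}-\re\nu,$$
which is precisely (\ref{dyn2}) with the parameters $\omega=\frac1b$, $B=\frac g{2b}$, $A=\frac{\im\nu}b$, $D=-\re\nu$ of (\ref{confcs}).

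I do not anticipate a genuine obstacle: the argument runs in complete parallel to Proposition \ref{prodyn1}, and the one point that requires attention is the bookkeeping of real and imaginary parts in the simple-pole term, where one must verify that the spurious $i\im\nu$ cancels so that the resulting vector field on $\mathbb{T}^2$ is real-valued. The fact that the double-pole identity above is the $\a\to1$ specialization of the corresponding identity in the non-confluent case is also what guarantees that family (\ref{dyn2}) is the confluent limit of family (\ref{dyn}), consistently with the passage from (\ref{kd}) to (\ref{y'c}).
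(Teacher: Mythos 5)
Your proposal is correct and follows essentially the same route as the paper's proof: pass to $\frac{d\theta}{d\tau}=\frac{z}{\Phi}\Phi'$, use $\Phi+\Phi^{-1}=2\cos\theta$ and $\frac{z}{(z-1)^2}=-\frac{1}{2(1-\cos\tau)}$, and extract $-\re\nu+\frac{\im\nu\sin\tau}{1-\cos\tau}$ from the simple-pole term. The only (immaterial) difference is that the paper evaluates $\frac{2z}{z-1}-1=\frac{z+1}{z-1}=-i\frac{\sin\tau}{1-\cos\tau}$ directly, whereas you compute $\frac{z}{z-1}=\frac12-\frac{i\sin\tau}{2(1-\cos\tau)}$ and then check the cancellation of $i\im\nu$ against $-i\im\nu$ from $-\nu$; both are the same bookkeeping.
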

 \begin{proof} As in the proof of Proposition \ref{prodyn1}, the corresponding  
 differential equation on the torus states that the derivative ${d\theta}/{d\tau}$ is equal to the right-hand side of~\eqref{dyn2} multiplied by $z/{\Phi}$. This together with formulas (here $|\Phi|=|z|=1$) 
 $$\Phi^{-1}+\Phi=2\cos\theta, \ \ \  \frac z{(z-1)^2}=\frac1{e^{i\tau}+e^{-i\tau}-2}=-\frac1{2(1-\cos\tau)},$$
 $$-\nu+\frac{(\nu-\bar\nu)z}{z-1}=-\re\nu+i\im\nu\left(\frac{2z}{z-1}-1\right),$$ 
 $$\frac{2z}{z-1}-1=\frac{z+1}{z-1}=\frac{(z+1)(\bar z-1)}{|z-1|^2}=\frac{e^{-i\tau}-e^{i\tau}}{2(1-\cos\tau)}=-i\frac{\sin\tau}{1-\cos\tau}$$
 yields
 $$\frac{d\theta}{d\tau}=\frac{b\cos\theta}{1-\cos\tau}+\left(-\re\nu+\frac{g}{2(1-\cos\tau)}+\im\nu\frac{\sin\tau}{1-\cos\tau}\right),$$
 which implies~\eqref{dyn2} with coefficients as in~\eqref{confcs}. 
 \end{proof}

\subsection{Family dRSJ and constriction breaking}\label{subsec:dRSJ-family}

Consider a differential equation on torus $\mathbb T^2=\mathbb R^2_{\theta,\tau}\slash2\pi\mathbb Z^2$ 
of the type $\frac{d\theta}{d\tau}=f(\theta,\tau)$; $f$ is $2\pi$-periodic in $\theta$ and in $\tau$. For example, 
an equation (\ref{dyn3}). Recall that its {\it Poincar\'e first return map} is the diffeomorphism of the transversal circle $S^1=S^1_{\theta}\times\{0\}$ given by the 
time $2\pi$ flow map. This is the map that sends an initial condition in $S^1$ to the point of the first return  to $S^1$ of the corresponding solution in positive time. 

\begin{theorem} \label{montr} Consider the 5-parameter dynamical system family~\eqref{dyn3}. 

1) Let  $\delta\in[0,1)$ and $A\in\mathbb{R}$ be both non-zero. Then 
for arbitrary choice of the other parameters $\omega\in\mathbb R\setminus\{0\}$, $B,D\in\mathbb{R}$ 
the Poincar\'e map of system~\eqref{dyn3} is not the identity. 

2) If a system~\eqref{dyn3} with $A=0$ has identity Poincar\'e map, then 
\begin{equation}B^2-1=\omega^2(1-\delta^2)(D-n)^2, \ \ n\in\mathbb{Z}.\label{montriv}\end{equation}
\end{theorem}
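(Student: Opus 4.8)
The plan is to read off the Poincaré map from the monodromy of the associated linear system (\ref{kd3}) and to reduce both statements to an elementary comparison of the eigenvalues of local monodromies.

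First I would recall that, since family (\ref{dyn3}) is the projectivization of the linear family (\ref{kd3}) (via $\Phi=Y_2/Y_1$ and $\Phi=e^{i\theta}$, $z=e^{i\tau}$), its Poincaré map is the projectivization $[M]$ of the monodromy matrix $M\in GL_2(\mathbb C)$ of (\ref{kd3}) along the loop $z=e^{i\tau}$, $\tau\in[0,2\pi]$, i.e.\ along the unit circle $\{|z|=1\}$. A M\"obius map fixing every point of $S^1_\Phi$ is the identity of $\cp^1$; hence the Poincaré map equals the identity if and only if $M$ is a scalar matrix. This is the only nontrivial reduction and it rests on the already–recorded fact that the Poincaré map is M\"obius.

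Next I would factor $M$ through the singularities enclosed by the unit circle. Since $\delta\in(0,1)$ forces $\alpha\in\mathbb R_{>0}\setminus\{1\}$, exactly one of $\alpha,\alpha^{-1}$ has modulus $<1$; call it $\sigma$, so that the interior punctures are $0$ and $\sigma$. Contracting the loop, $M=M_\sigma M_0$ up to ordering, where $M_0,M_\sigma$ are the local monodromies about $0$ and $\sigma$. Their eigenvalues are $\exp(2\pi i\lambda)$ over the eigenvalues $\lambda$ of the corresponding residue matrices in (\ref{kd3}): at $0$ the residue $\diag(\nu,0)$ gives $\{1,e^{2\pi i\nu}\}$, while at $\sigma$ the residue (either off-diagonal block of (\ref{kd3})) has, by a direct computation, eigenvalue difference $\Delta:=\sqrt{(\nu+c)^2-4b^2}$, the same for $\alpha$ and for $\alpha^{-1}$. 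If $M=\lambda I$ then $M_\sigma=\lambda M_0^{-1}$, so the eigenvalue multisets coincide: $\{e^{2\pi i\mu_+},e^{2\pi i\mu_-}\}=\{\lambda,\lambda e^{-2\pi i\nu}\}$ with $\mu_+-\mu_-=\pm\Delta$. Comparing the two eigenvalue ratios gives $e^{2\pi i\Delta}\in\{e^{2\pi i\nu},e^{-2\pi i\nu}\}$, i.e.\ $\Delta=\pm\nu+n$ for some $n\in\mathbb Z$, and squaring yields the single scalar relation
\begin{equation}(\nu+c)^2-4b^2=(\nu+n)^2,\qquad n\in\mathbb Z.\label{eq:plankey}\end{equation}

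For part 2, where $A=0$ means $\im\nu=0$ and hence $\nu=-D\in\mathbb R$, I would substitute the dictionary (\ref{coefd3}). From $B=\frac{c+\nu}{2b}$ one gets $B^2-1=\frac{(\nu+c)^2-4b^2}{4b^2}=\frac{(\nu+n)^2}{4b^2}=\frac{(D-n)^2}{4b^2}$, while the identity $(\alpha+\alpha^{-1})^2-4=(\alpha-\alpha^{-1})^2$ gives $\omega^2(1-\delta^2)=\frac1{4b^2}$; together these are exactly (\ref{montriv}). For part 1, where $A\neq0$ means $\im\nu\neq0$ (and $b\neq0$ throughout the family), I would take imaginary parts in (\ref{eq:plankey}): writing $(\nu+c)^2-(\nu+n)^2=(c-n)(2\nu+c+n)$, whose imaginary part is $2(c-n)\im\nu$, the reality of $4b^2$ forces $c=n$, whence $4b^2=0$, contradicting $b\neq0$ (the sign choice $\Delta=-\nu+n$ is identical, with $c=-n$). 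Thus $M$ is never scalar and the Poincaré map is never the identity. The one delicate point is the monodromy bookkeeping in the third paragraph: making the factorization $M=M_\sigma M_0$ precise (base-point conjugations and loop ordering) and, above all, checking that the eigenvalue-multiset comparison survives the resonant cases $\nu\in\mathbb Z$ or $\Delta\in\mathbb Z$, where a local monodromy may fail to be diagonalizable. Since equal matrices trivially have equal spectra, the ratio comparison is insensitive to this, which is what makes the argument uniform; everything else is the linear-algebra computation of $\Delta$ and the dictionary (\ref{coefd3}). The boundary value $\delta=0$ is the RSJ model, for which part 1 is classical.
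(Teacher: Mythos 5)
Your argument is essentially the paper's own proof: reduce the identity Poincar\'e map to scalar monodromy of the linear system (\ref{kd3}), factor $M$ through the two singularities inside the unit disk, compare eigenvalue ratios of the local monodromies to get $(\nu+n)^2=\lambda_\sigma^2$ with $\lambda_\sigma^2=(\bar\nu+c)^2-4b^2$, and then take imaginary parts (part 1) or substitute the dictionary (\ref{coefd3}) together with $4b^2=\frac{1}{\omega^2(1-\delta^2)}$ (part 2). The only slip is that the eigenvalue differences at $\alpha$ and $\alpha^{-1}$ are complex conjugates, $\sqrt{(\bar\nu+c)^2-4b^2}$ and $\sqrt{(\nu+c)^2-4b^2}$, not equal as you claim; this is harmless, since whichever point lies in the disk the imaginary-part argument still forces $\im\nu=0$.
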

\begin{proof} Let us prove Statement 1). Suppose the contrary: the Poincar\'e map is identity for some parameter values with $\delta\neq 0$, $A\neq0$. Then the corresponding Riccati equation~\eqref{ric3} has trivial monodromy along the counterclockwise unit circle in the Riemann sphere $\overline{\mathbb{C}}_z$. Equivalently, the corresponding linear system~\eqref{kd3} has scalar monodromy $M=\lambda Id$, $\lambda\in\mathbb{C}^*$. It has two finite non-zero singularities: the roots $\alpha^{\pm1}$ of the quadratic polynomial $2z-\delta(z^2+1)$. We label them  so that $\alpha\in(0,1)$. Then the monodromy operator $M$ is the product of the monodromy operators $M_0$, $M_\alpha$ around the singular points $0$ and $\alpha$ lying in the unit disk. Projective triviality of $M$ implies coincidence of eigenvectors of the operators  $M_0$ and $M_\alpha$ and the fact that the ratio of eigenvalues of $M_0$ is equal to that of $M_\alpha$ up to taking inverse. 

Let $\lambda_0$, $\lambda_{\alpha}$ denote the differences of the residue matrix eigenvalues at $0$ and at $\alpha$ respectively. The above eigenvalue ratio relation is equivalent to  the relation 
$$\lambda_0=\pm\lambda_{\alpha}-m, \ \ \ m\in\mathbb{Z},$$
which implies 
\begin{equation}(\lambda_0+m)^2=\lambda_\alpha^2, \ \ \ m\in\mathbb{Z}.\label{laoa}\end{equation}
The direct calculation of residue eigenvalues in~\eqref{kd3} yields
$$\lambda_0=\pm\nu, \ \ \lambda_{\alpha}=\pm\sqrt{(\bar\nu+c)^2-4b^2}.$$
This together with~\eqref{laoa} yields $(\nu+n)^2=(\bar\nu+c)^2-4b^2$, $n=\pm m\in\mathbb{Z}$, thus, 
\begin{equation}(2\re\nu+c+n)(-2i\im\nu+c-n)=4b^2.\label{lanu}\end{equation}
Recall that $b$ is real and non-zero. This together with~\eqref{lanu} implies that 
$\im\nu=0$. But then $A=\im\nu/[b(\a-\a^{-1})]=0$. The contradiction thus obtained 
proves Statement 1).

Let us prove Statement 2). Let the monodromy be trivial, $\delta\in(0,1)$ and $A=0$; thus, 
$\im\nu=0$. 
Then relation~\eqref{lanu} yields
$$(c+\re\nu)^2-(n+\re\nu)^2=4b^2.$$
Substituting there formulas $c+\re\nu=2bB$, $\re\nu=-D$, see~\eqref{coefd3}, yields
\begin{equation}4b^2(B^2-1)=(D-n)^2, \ \ n\in\mathbb{Z}.\label{bbd}\end{equation}
One has
\begin{equation}b=\frac1{\delta \omega(\alpha-\alpha^{-1})}, \ \ \ (\alpha-\alpha^{-1})^2=\frac{4(1-\delta^2)}{\delta^2},\label{ba}\end{equation} 
respectively by~\eqref{coefd3} and the discriminant formula for the quadratic polynomial $$z(1-\delta(z+z^{-1})/2)=-\frac{\delta}2(z-\alpha)(z-\alpha^{-1}).$$
Substituting formulas~\eqref{ba} to~\eqref{bbd} yields $(B^2-1)/(\omega^2(1-\delta^2))=(D-n)^2$, which implies~\eqref{montriv}. Theorem \ref{montr} is proved.
\end{proof}

Consider family~\eqref{dyn3}$_{\omega, \delta,  D}$ of dynamical systems~\eqref{dyn3} on torus, in which we fix the three parameters $\omega>0$, $\delta\in[0,1)$ and $D$; for example, $D=0$. 
This is a family of dynamical systems that depend on two parameters $(B,A)$. 
If $D=0$, then as $\delta\to0$, the system~\eqref{dyn3}$_{\omega, \delta,  0}$ degenerates to the RSJ model. Thus, the family~\eqref{dyn3}$_{\omega, \delta,  0}$ can be considered as a deformation of the RSJ model~\eqref{dyn3}$_{\omega, 0,  0}$: 
\begin{equation}\frac{d\theta}{d\tau}=\frac1{\omega}(\cos\theta+B+A\sin\tau).\label{jos0}
\end{equation} 
Recall that for every $r\in\mathbb{Z}$ the $r$-th phase-lock area $L_r$ in $\mathbb{R}^2_{B,A}$ of the non-perturbed system, i.e.,~\eqref{jos0}, is a garland of infinitely many connected components of its interior that tend to infinity in the vertical direction.  Each of the two neighbor components is separated by one point. Those of the separation points that do not lie in the abscissa axis, i.e., for which $A\neq0$, are called {\it constrictions.} It is known that all the constrictions in $L_r$ lie in the vertical line $\{ B=\omega r\}$, see~\cite{bibilo2022families}.
 
\begin{corollary} \label{cmontr} Let $D=0$, $\omega\in\mathbb R\setminus\{0\}$ be fixed. As we perturb system~\eqref{jos0} to system~\eqref{dyn3}$_{\omega, \delta,  D}$ with small fixed $\delta\neq0$ and arbitrary small $D\in\mathbb R$, then all the constrictions break down. Thus, the intersections of the interiors of the phase-lock areas with upper and  lower half-planes $\{\pm A>0\}$ become connected. See Fig.~\ref{fig:phase-lock-areas-AB-plane} for $\omega=1$.
\end{corollary}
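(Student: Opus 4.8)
The plan is to reduce the corollary to Theorem~\ref{montr} through the standard characterization of constrictions as the parameter points at which the circle Poincar\'e map degenerates to the identity. First I would recall that for a family of M\"obius circle diffeomorphisms obtained as projectivizations of linear systems, a point of a phase-lock area $L_r$ at which two adjacent components of the interior meet --- a \emph{separation point} --- is a point where the monodromy of the associated linear system (here~(\ref{kd3})) is scalar, equivalently where the Poincar\'e map is the identity; at such a point every orbit on the circle is periodic and the width of the locking region collapses to zero. This is the characterization underlying the analysis of constrictions in~\cite{klim-rom,glutsyuk2019constrictions,bibilo2022families}. By definition a constriction is precisely such a separation point lying off the abscissa, i.e.\ with $A\neq0$.

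With this dictionary in place the core assertion is immediate from Statement~1) of Theorem~\ref{montr}. For the perturbed family (\ref{dyn3})$_{\omega,\delta,D}$ with $\delta\neq0$, that statement asserts that whenever $A\neq0$ the Poincar\'e map is never the identity, for \emph{every} choice of $\omega\in\mathbb R\setminus\{0\}$ and $B,D\in\mathbb R$. Hence there is no parameter point with $A\neq0$ at which the monodromy is scalar, so no constriction can occur. Since this holds for all $\delta\in(0,1)$ and all $D$ --- in particular for arbitrarily small $\delta\neq0$ and arbitrarily small $D$ --- the perturbation destroys every constriction of the unperturbed model (\ref{jos0}), which is exactly the claim that the constrictions break down.

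It then remains to pass from \emph{no constrictions} to \emph{connectedness in each open half-plane}. Here I would invoke the garland structure recalled just before the corollary: $L_r$ is a chain of interior components separated by points, and each separation point is either a constriction ($A\neq0$) or lies on the axis $\{A=0\}$. Having ruled out constrictions for $\delta\neq0$, the restriction $L_r\cap\{A>0\}$ (resp.\ $L_r\cap\{A<0\}$) contains no separation point, so the beads previously strung along the line merge into a single connected region in each open half-plane; on the axis itself the separation points that survive are constrained, by Statement~2) of Theorem~\ref{montr}, to the discrete set $B^2-1=\omega^2(1-\delta^2)(D-n)^2$, $n\in\mathbb Z$, and do not disconnect the open half-plane pieces.

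The main obstacle is the first and last steps rather than the middle one. One must tie constrictions rigorously to the identity Poincar\'e map for this family, and --- more delicately --- justify that the vertical garland description, whose only internal separations are constrictions, remains structurally valid after turning on $\delta$, so that removing the constrictions genuinely yields a connected region and does not permit new components to form elsewhere. For small $\delta$ this should follow from continuity of the rotation number function together with persistence of the hyperbolic/parabolic stratification of the parameter plane, but it is the point that demands the most care and where a careful appeal to the cited structural results~\cite{buchstaber2017monodromy,glutsyuk2019constrictions,bibilo2022families} is needed.
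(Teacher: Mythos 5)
Your proposal takes essentially the same route as the paper's own proof: both reduce the corollary to the standard fact that a constriction of a $B$-monotone family with M\"obius Poincar\'e maps forces the Poincar\'e map to be the identity (the paper cites \cite{glutsyuk2014adjacency} and \cite{buchstaber2017monodromy} for exactly this), and then apply Statement 1) of Theorem \ref{montr} to rule out identity Poincar\'e maps when $\delta\neq0$ and $A\neq0$. The connectedness assertion is not argued beyond this in the paper either, so your extra discussion of the garland structure supplements rather than departs from the authors' argument.
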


\begin{figure}[h!]
\centering
\includegraphics[width=0.8\linewidth]{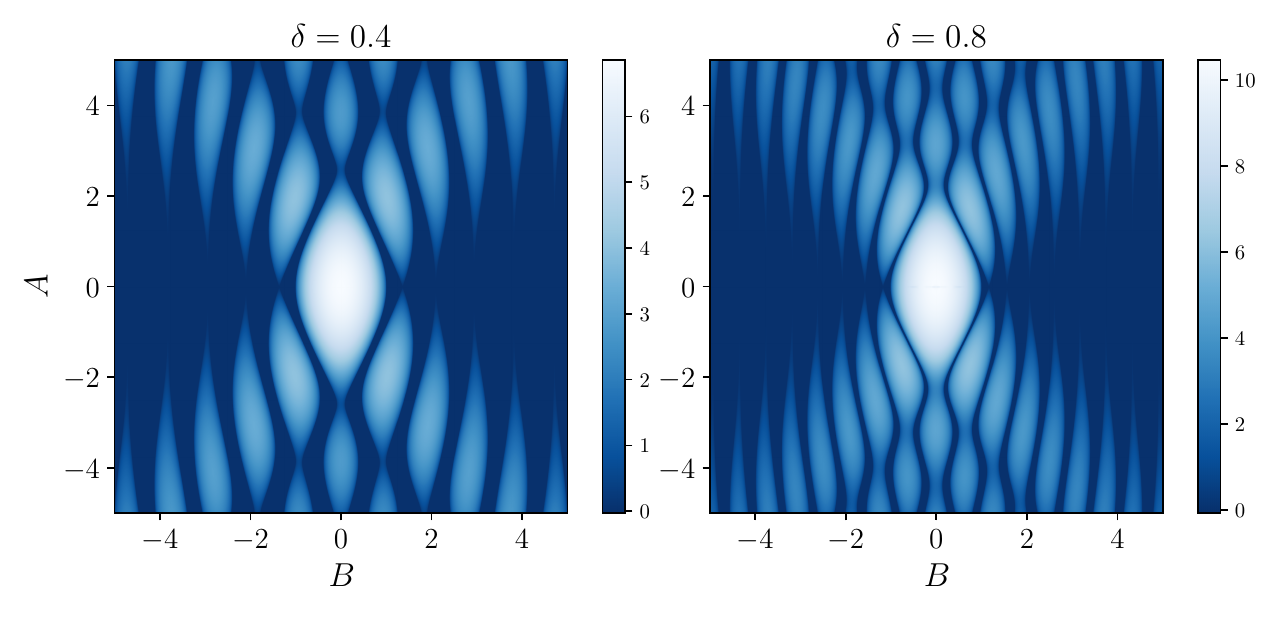}
\caption{Phase-lock areas of system~\eqref{dyn3} in $(B,A)$-plane without constrictions.  Here $\omega=1$, $D=0$. Color bar represents the values of the largest Lyapunov exponent.}
\label{fig:phase-lock-areas-AB-plane}
\end{figure}

\begin{figure}[h!]
\centering
\includegraphics[width=0.8\linewidth]{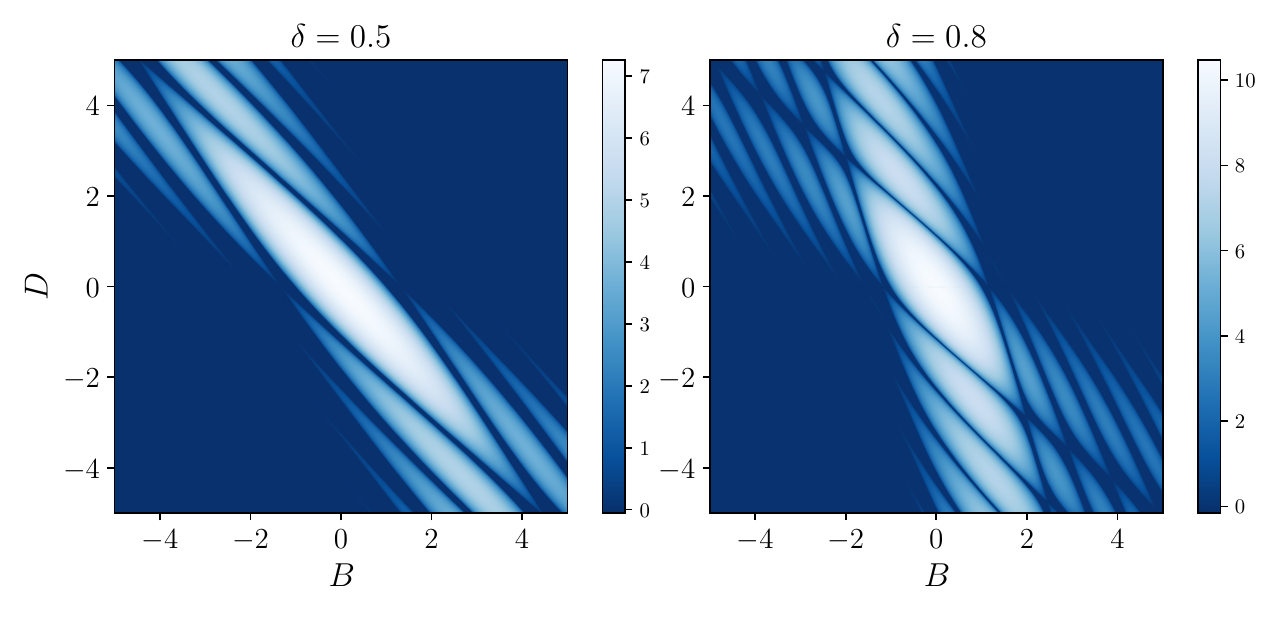}
\caption{Phase-lock areas of system~\eqref{dyn3} with $A=0$, $\omega=1$ in the $(B,D)$-plane. Color bar represents the values of the largest Lyapunov exponent.}
\label{fig:phase-lock-areas-BD-plane}
\end{figure}

\begin{proof} Consider an arbitrary family of dynamical systems depending on parameters $(B,A)$ and depending strictly monotonously on $B$. Then each constriction of a phase-lock area (if any) always corresponds to a dynamical system with trivial Poincar\'e map, see \cite[proposition 2.2]{glutsyuk2014adjacency}. Though this proposition was stated in a special case, it remains valid in full generality together with its proof. This together with Theorem \ref{montr} implies that if $\delta\neq0$, then there are no constrictions with $A\neq0$.
\end{proof}

Consider the phase-lock area portrait of family (\ref{dyn3}) in the $(B,D)$-plane, see Fig. 3 for $\omega=1$. Despite the fact that the constrictions are broken down for $\delta>0$, the growth line of the phase-lock areas remains. This is  the line $\{ A=D=0\}$. The location of the growth points on this line is determined by the condition~\eqref{montriv}. Let us also demonstrate that the location of the growth points can be computed in another way.

\begin{theorem} \label{growd3}
Consider the family of systems~\eqref{dyn3} with $A=D=0$ $B>1$. Then the monodromy $M$ of the corresponding linear system is trivial if and only if 
$$B^2=\omega^2(1-\delta^2)n^2+1, \ \ \ n\in\mathbb{Z}.$$
\end{theorem}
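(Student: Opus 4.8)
The plan is to bypass the residue--eigenvalue computation of Theorem \ref{montr} and instead integrate the dynamical system directly, which is possible because at $A=D=0$ it becomes separable. First I would record that setting $A=D=0$ in (\ref{dyn3}) gives
\[
\frac{d\theta}{d\tau}=\frac{\cos\theta+B}{\omega(1-\delta\cos\tau)}=\frac{f(\theta)}{g(\tau)},\qquad f(\theta)=\cos\theta+B,\quad g(\tau)=\omega(1-\delta\cos\tau).
\]
Since $B>1$ and $\delta\in(0,1)$, neither $f$ nor $g$ vanishes, so the flow has no equilibria and is well defined on all of $\mathbb T^2$. The bridge to the linear problem is the correspondence of Proposition \ref{prodyn1}: under $\Phi=e^{i\theta}$, $z=e^{i\tau}$ this equation is the restriction to $S^1_\theta\times S^1_\tau$ of the projectivized linear system (\ref{kd3}) (Riccati equation (\ref{ric3})). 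Consequently the monodromy $M$ of (\ref{kd3}) along the unit circle is projectively trivial (scalar, $M=\lambda\,\mathrm{Id}$) if and only if its projectivization --- the Poincar\'e map of the above equation over $\tau\in[0,2\pi]$ --- is the identity circle diffeomorphism.

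Next I would conjugate this Poincar\'e map to a rigid rotation by quadrature. Set
\[
F(\theta)=\int_0^\theta\frac{d\theta'}{\cos\theta'+B}.
\]
Because $\cos\theta+B>0$ --- this is exactly where the hypothesis $B>1$ enters --- $F$ is an orientation-preserving diffeomorphism satisfying $F(\theta+2\pi)=F(\theta)+P$ with $P=\int_0^{2\pi}\frac{d\theta}{\cos\theta+B}$. Along any trajectory $\frac{d}{d\tau}F(\theta(\tau))=1/g(\tau)$, so over one $\tau$-period $F$ increases by the constant
\[
T=\int_0^{2\pi}\frac{d\tau}{\omega(1-\delta\cos\tau)}.
\]
In the coordinate $F$, which lives on the circle $\mathbb R/P\mathbb Z$, the Poincar\'e map is therefore the translation $F_0\mapsto F_0+T$, and it is the identity if and only if $T\in P\mathbb Z$, i.e. $T=nP$ for some $n\in\mathbb Z$.

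Finally I would evaluate the two elementary integrals $P=\dfrac{2\pi}{\sqrt{B^2-1}}$ and $T=\dfrac{2\pi}{\omega\sqrt{1-\delta^2}}$ and substitute them into $T=nP$. This gives $\sqrt{B^2-1}=n\,\omega\sqrt{1-\delta^2}$, and squaring yields $B^2=\omega^2(1-\delta^2)\,n^2+1$ with $n\in\mathbb Z$ (the sign of $\omega$ being absorbed into the sign of $n$, so that only $n^2$ remains). This reproduces (\ref{montriv}) with $D=0$, as it must.

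The two integral evaluations are routine, so the one step that deserves care is the dictionary in the first paragraph: one must argue that projective triviality of $M$ is equivalent to the Poincar\'e map being the identity on the whole circle $S^1_\theta$, using that a M\"obius transformation which fixes every point of a circle is globally the identity and hence forces $M$ to be scalar. Granting that, the \emph{heart of the alternative computation} is the conjugation of the Poincar\'e map to a translation through the diffeomorphism $F$, which is legitimate precisely because $B>1$ keeps $\cos\theta+B$ bounded away from zero.
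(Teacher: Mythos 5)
Your argument is correct, and it is genuinely different from the paper's. The paper proves this by staying on the linear-system side: it writes the Riccati equation as the projectivization of a system $Y'=\frac{Q}{\omega(\delta(1+z^2)-2z)}Y$ with constant matrix $Q$, locates the unique Fuchsian singularity $z_-$ inside the unit disk, diagonalizes $Q$ (using $B>1$), and obtains $M$ as the exponential of $2\pi i$ times the residue, so that scalarity of $M$ becomes $\mu_1-\mu_2=\frac{\sqrt{B^2-1}}{\omega\sqrt{1-\delta^2}}\in\mathbb Z$. You instead exploit separability of the real equation at $A=D=0$: the quadrature $F(\theta)=\int_0^\theta\frac{d\theta'}{\cos\theta'+B}$ conjugates the Poincar\'e map to the translation by $T=\int_0^{2\pi}\frac{d\tau}{\omega(1-\delta\cos\tau)}$ on the circle $\mathbb R/P\mathbb Z$, $P=\int_0^{2\pi}\frac{d\theta}{\cos\theta+B}$, and the criterion $T\in P\mathbb Z$ reduces to two elementary integrals. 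Both proofs rest on the same dictionary --- projective triviality of $M$ is equivalent to the Poincar\'e map being the identity, via the fact that a M\"obius map fixing the whole invariant circle is the identity --- and you rightly single this out as the step needing justification. What your route buys: it is entirely elementary and real, and it hands you the exact rotation number $\rho=T/P=\frac{\sqrt{B^2-1}}{\omega\sqrt{1-\delta^2}}$ directly, whereas the paper's corollary obtains this only modulo $\mathbb Z$ and sign and must invoke continuity and monotonicity in $B$ to pin it down. What the paper's route buys: it is uniform with the monodromy machinery used throughout (in particular with Theorem \ref{montr}) and extends to the non-separable cases $A\neq0$ or $D\neq0$, where no conjugating quadrature is available. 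One cosmetic point: state explicitly that ``trivial monodromy'' here means scalar (projectively trivial), which is how the paper uses the word; your equivalence proves exactly that.
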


\begin{proof}

Consider the system with $A=0$ and $D=0$,
\begin{equation}\label{eq:zero-AD-tor-sys}
    \frac{d\theta}{d\tau} = \frac{B+\cos\theta}{\omega(1-\delta\cos\tau)}.
\end{equation}
Let $z=e^{i\tau}$, $\Phi=e^{i\theta}$, which gives us
\begin{equation}\label{eq:Riccati-sys}
    \frac{d\Phi}{dz}=-\frac{1+2B\Phi+\Phi^2}{(\delta(1+z^2)-2z)\omega}.
\end{equation}
It is the Riccati equation obtained by  projectivization of the linear system 
\begin{equation}\label{eq:lin-sys}
    \frac{d}{dz}
    \begin{pmatrix}
        u \\ v
    \end{pmatrix}=
    \frac{1}{(\delta(1+z^2)-2z)\omega}
    \begin{pmatrix}
        2B & 1 \\ -1 & 0
    \end{pmatrix}
    \begin{pmatrix}
        u \\ v
    \end{pmatrix}:
\end{equation}
$\Phi(z)$ is a solution of~\eqref{eq:Riccati-sys}, if and only if $\Phi(z)=\frac{v(z)}{u(z)}$, where $(u(z),v(z))$ is a solution of system~\eqref{eq:lin-sys}. 
We are interested in the monodromy of this linear system, computed on the counterclockwise unit circle contour. To compute it, we need to find poles of the denominator in the right hand side of eq.~\eqref{eq:Riccati-sys}. They are given by
\begin{equation}
    z_{\pm} = \frac{1\pm\sqrt{1-\delta^2}}{\delta}.
\end{equation}
Only the point $z_{-}$ belongs to the unit disc, the integration contour interior. Thus, the point $z_+$ is not relevant for our goals. Denote the matrix in the right-hand side of~\eqref{eq:lin-sys} by $Q$,
\begin{equation}
    Q=
    \begin{pmatrix}
        2B & 1 \\ -1 & 0
    \end{pmatrix}.
\end{equation}
This matrix has the following eigenvalues,
\begin{equation}
    \lambda_{1,2}=B\mp\sqrt{B^2-1}.
\end{equation}
Thus, it is diagonalizable: $\lambda_1\neq \lambda_2$, since $B>1$. 
Since the point $z_{-}$ is a Fuchsian singularity, the monodromy matrix is conjugated to 
\begin{equation}
    M = \exp\left\{2\pi i\mathop{\text{Res}}_{z=z_{-}}\frac{\diag(\lambda_1,\,\lambda_2)}{\omega(\delta(1+z^2)-2z)}\right\}=
    \begin{pmatrix}e^{2\pi i\mu_1} & \\ 0 & e^{2\pi i\mu_2}\end{pmatrix},
\end{equation}
where the quantities $\mu_1$ and $\mu_2$ are defined by
\begin{equation}
    \mu_1=\frac{-B+\sqrt{B^2-1}}{2\omega\sqrt{1-\delta^2}},\quad \mu_2=\frac{-B-\sqrt{B^2-1}}{2\omega\sqrt{1-\delta^2}}.
\end{equation}
The monodromy $M$ is scalar, if and only if the ratio of eigenvalues is equal to unity,
\begin{equation*}
    \frac{e^{2\pi i\mu_1}}{e^{2\pi i\mu_2}}=e^{2\pi i(\mu_1-\mu_2)}=1.
\end{equation*}
This is equivalent to the equality
\begin{equation}
   \mu_1-\mu_2= \frac{\sqrt{B^2-1}}{\omega\sqrt{1-\delta^2}}=n,\quad n\in\mathbb{Z}.
\end{equation}
Rearranging this expression, we find that the monodromy $M$ is trivial,  if and only if the parameters obey
\begin{equation}
    B^2=\omega^2n^2\left(1-\delta^2\right)+1,
\end{equation}
which in fact coincides with eq.~\eqref{montriv} with $D=0$.
\end{proof}

\begin{proposition} \label{crot}
The rotation number $\rho$ of~\eqref{dyn3} for $A=D=0$, $\omega>0$ and  $B>1$ is given by
\begin{equation}
    \rho = \frac{1}{2\pi i}\ln\frac{e^{2\pi i\mu_1}}{e^{2\pi i\mu_2}}=\mu_1-\mu_2 = \frac{\sqrt{B^2-1}}{\omega\sqrt{1-\delta^2}}.\label{m1m2}
\end{equation}
\end{proposition}
\begin{proof} Formula~\eqref{m1m2} holds modulo $\mathbb{Z}$ and modulo sign.  Indeed,  in our case the Poincar\'e map is an elliptic M\"obius transformation. Its rotation number, which is a number modulo $\mathbb Z$, is equal to $\frac1{2\pi}$ times the argument of its  derivative at its fixed point up to sign: in our case it is the right-hand side in~\eqref{m1m2}.  For $|B|\leq1$ the dynamical system equation has constant solution $\theta\equiv\arccos B$, and hence, $\rho=0$. The rotation number is clearly continuous and non-decreasing as a function in $B$, as is the dynamical system. This together with the above argument implies~\eqref{m1m2}.
\end{proof}

\section{Conclusion}

In this work, we have introduced two families of the dynamical systems on the 2D torus that are directly related to the general (non-confluent) Heun equation (GHE) and the confluent Heun equation (CHE). This is done by considering appropriate class of linear systems of torus dynamical type that are equivalent to Heun equations on the second component of vector solution. The corresponding dynamical systems are obtained by taking projectivization of linear systems, i.e., Riccati equations,  and their subsequent restriction to the product of unit circles in space and time variables. The family related to GHE, the so-called deformed RSJ model (dRSJ), demonstrates an interesting property in terms of phase lock areas: in this family constrictions are broken, whereas the growth points still exist.

From a physical point of view, it will be interesting and reasonable to find a realistic physical system, where the dRSJ describes dynamics (or may be effective dynamics). As one can see, the dRSJ model implies that the critical current becomes a function of time. We assume that one of the following systems can be used as a starting point to realize the desired dRSJ dynamics: SQUIDs, where the critical current is sensitive to the flux and external magnetic field; systems where Cooper pair breaking can occur, which implies that the superconducting gap can change in time; Josephson junctions with additional ferromagnetic barriers that can affect the critical current via spin dynamics.

Another possible interpretation of the obtained systems and results is related to the discussion of quasi-normal modes in quantum mechanical problems of different nature. We postpone these questions for future work. From a mathematical point of view, some of the problems related to these two families of dynamical systems remain open. It will be interesting to resolve the mentioned open problems and obtain a general picture that relates different special cases of the Heun equation (i.e. confluent, double-confluent, etc.) to a system on the 2D torus (if possible) and to understand in detail the structure of their phase lock areas. 

\section*{Acknowledgements}

We are grateful to A.S. Gorsky for a statement of the problem to find dynamical systems and phase locking phenomena for general Heun equations and for helpful discussions. The research of A. Aleksandrov is supported by Basic Research Program at HSE University. The research of A. Glutsyuk is supported by the MSHE project No. FSMG-2024-0048 and by grant No. 24-7-1-15-1 of the Theoretical Physics and Mathematics Advancement Foundation ``BASIS''.

\bibliographystyle{abbrvnat}
\bibliography{ref}

\end{document}